\newcommand\version{March 19, 2024}
\newtheorem{theorem}{Theorem}
\newtheorem{proposition}[theorem]{Proposition}
\newtheorem{lemma}[theorem]{Lemma}
\newtheorem{corollary}[theorem]{Corollary}
\theoremstyle{definition}
\theoremstyle{remark}
\newtheorem{remark}[theorem]{Remark}
\newtheorem{remarks}[theorem]{Remarks}
\newcommand{\1}{\mathbbm{1}}
\newcommand{\const}{\mathrm{const}\ }
\renewcommand{\epsilon}{\varepsilon}
\newcommand{\loc}{{\rm loc}}
\newcommand{\N}{\mathbb{N}}
\renewcommand{\phi}{\varphi}
\newcommand{\R}{\mathbb{R}}
\newcommand{\Z}{\mathbb{Z}}
\DeclareMathOperator{\dist}{dist}
\DeclareMathOperator{\spec}{spec}
\DeclareMathOperator{\sgn}{sgn}
\DeclareMathOperator{\Tr}{Tr}
\DeclareMathOperator{\tr}{Tr}
\begin{document}

\title[Hardy inequalities for large fermionic systems --- \version]{Hardy inequalities for large fermionic systems}

\author[R. L. Frank]{Rupert L. Frank}
\address[Rupert L. Frank]{Mathe\-matisches Institut, Ludwig-Maximilians Universit\"at M\"unchen, The\-resienstr.~39, 80333 M\"un\-chen, Germany, and Munich Center for Quantum Science and Technology, Schel\-ling\-str.~4, 80799 M\"unchen, Germany, and Mathematics 253-37, Caltech, Pasa\-de\-na, CA 91125, USA}
\email{r.frank@lmu.de}

\author[T. Hoffmann-Ostenhof]{Thomas Hoffmann-Ostenhof}
\address[Thomas Hoffmann-Ostenhof]{University of Vienna, Department of Theoretical Chemistry, W\"ahringerstrasse 17, 1090 Wien, Austria}
\email{thoffmann@tbi.univie.ac.at}

\author[A. Laptev]{Ari Laptev}
\address[Ari Laptev]{Imperial College London, 180 Queen's Gate, London SW7 2AZ, UK, and Sirius Mathematics Center, Sirius University of Science and Technology, 1 Olympic Ave, 354340, Sochi, Russia}
\email{a.laptev@imperial.ac.uk}

\author[J. P. Solovej]{Jan Philip Solovej}
\address[Jan Philip Solovej]{Department of Mathematical Sciences, University of Copenhagen, Universitetsparken 5, 2100 Copenhagen, Denmark}
\email{solovej@math.ku.dk}

\thanks{\copyright\, 2024 by the authors. This paper may be reproduced, in its entirety, for non-commercial purposes.\\
Partial support through US National Science Foundation grant DMS-1954995 (R.L.F.), as well as through the Deutsche Forschungsgemeinschaft (DFG, German Research Foundation) through Germany’s Excellence Strategy EXC-2111-390814868 (R.L.F.) is acknowledged. \texttt{Add acknowledgements}}

\dedicatory{To Brian Davies, in admiration, on the occasion of his 80th birthday}

\begin{abstract}
	Given $0<s<\frac d2$ with $s\leq 1$, we are interested in the large $N$-behavior of the optimal constant $\kappa_N$ in the Hardy inequality $\sum_{n=1}^N (-\Delta_n)^s \geq \kappa_N \sum_{n<m} |X_n-X_m|^{-2s}$, when restricted to antisymmetric functions. We show that $N^{1-\frac{2s}d}\kappa_N$ has a positive, finite limit given by a certain variational problem, thereby generalizing a result of Lieb and Yau related to the Chandrasekhar theory of gravitational collapse.
\end{abstract}

\maketitle

\section{Introduction and main result}

A prototypical form of Hardy's inequality states that
$$
\int_{\R^d} |\nabla u(x)|^2\,dx \geq \frac{(d-2)^2}{4} \int_{\R^d} \frac{|u(x)|^2}{|x|^2}\,dx
$$
when $d\geq 3$ and $u\in\dot H^1(\R^d)$, the homogeneous Sobolev space. This and other forms of Hardy's inequality are fundamental tools in many questions in PDE, harmonic analysis, spectral theory and mathematical physics. We refer to the survey paper by Davies \cite{Da} and the books of Maz’ya \cite{Ma} and Opic and Kufner \cite{OpKu} for extensive results, as well as background and further references.

In \cite{HO2LaTi}, the second and third authors and their coauthors studied what they called \emph{many-particle Hardy inequalities}. These are inequalities for functions defined on $\R^{dN}$ with coordinates denoted by $X=(X_1,\ldots,X_N)$ with $X_1,\ldots, X_N \in\R^d$. Here $N\geq 2$ can be interpreted as the number of (quantum) particles in $\R^d$ and the $X_n$, $n=1,\ldots,N$, as their positions. The Hardy weight takes the form $\sum_{1\leq n<m\leq N} |X_n-X_m|^{-2}$. It is shown in \cite{HO2LaTi} that
\begin{equation}
	\label{eq:hardymany}
	\sum_{n=1}^N \int_{\R^{dN}} |\nabla_n u(X)|^2 \,dX \geq \beta_N^{(d)} \sum_{1\leq n<m\leq N} \int_{\R^{dN}} \frac{|u(X)|^2}{|X_n-X_m|^2}\,dX
\end{equation}
for all $u\in\dot H^1(\R^{dN})$ with a certain explicit lower bound on the optimal constant $\beta_d^{(N)}$ that is positive, provided $d\geq 3$ and $N\geq 2$. What is of interest is the behavior of the optimal constant $\beta_d^{(N)}$ as $N\to\infty$ for fixed $d$. This corresponds to the many-particle limit, a classical topic in mathematical physics. The explicit lower bound for $\beta_N^{(d)}$ obtained in \cite{HO2LaTi} shows that $\liminf_{N\to\infty} N^{-1} \beta_N^{(d)}>0$. It is also noted in \cite{HO2LaTi} that $\limsup_{N\to\infty} N^{-1} \beta_N^{(d)}<\infty$. The methods of this paper allow us to prove that $\lim_{N\to\infty} N^{-1} \beta_N^{(d)}$ exists and give an explicit expression for it in terms of a variational problem for functions on $\R^d$; see Subsection \ref{sec:bosons} for details.

Our main interest, however, lies in a variant of inequality \eqref{eq:hardymany}, namely its restriction to \emph{antisymmetric functions}. A function $u$ on $\R^{dN}$ is called antisymmetric if for any permutation $\sigma$ of $\{1,\ldots,N\}$ and a.e.\ $X\in\R^{dN}$ one has
$$
u(X_1,\ldots,X_N) = (\sgn\sigma) \, u(X_{\sigma(1)},\ldots,X_{\sigma(N)}) \,.
$$
Here $\sgn\sigma\in\{+1,-1\}$ denotes the sign of $\sigma$. The antisymmetry requirement appears naturally in physics in the description of fermions. (Note that we restrict ourselves here to scalar functions, corresponding to the spinless or spin-polarized situation, although in our results for dimension $d\geq 3$ spin could be incorporated; however, for $d=1,2$, when $|x|^{-2}$ is not locally integrable, the spin-polarization is crucial.)

We denote by $\kappa_N^{(d)}$ the optimal constant in inequality \eqref{eq:hardymany} when restricted to antisymmetric functions, that is,
$$
\kappa_N^{(d)} := \inf_{\substack{0\neq u\in\dot H^1(\R^{dN}) \\ \text{antisymmetric}}} \frac{\sum_{n=1}^N \int_{\R^{dN}} |\nabla_n u(X)|^2\,dX}{\sum_{1\leq n<m\leq N} \int_{\R^{dN}} \frac{|u(X)|^2}{|X_n-X_m|^2}\,dX} \,.
$$

As emphasized in \cite{HO2LaTi}, there are signifcant differences between the inequality on all functions in $\dot H^1(\R^{dN})$ and its restriction to antisymmetric ones. One important difference is that $\kappa_N^{(d)}>0$ for all $d\geq 1$ and $N\geq 2$, while $\beta_N^{(d)}=0$ for $d=1,2$ and $N\geq 2$; see \cite[Remarks 2.2(i) and Theorem 2.8]{HO2LaTi}. Remarkably, for $d=1$ the explicit value of the sharp constant $\kappa_N^{(1)}$ is known, $\kappa_N^{(1)}=\frac12$ for all $N\geq 2$ \cite[Theorem 2.5]{HO2LaTi}. For $d\geq 2$, as far as we know, only the lower bound $\kappa_N^{(d)}\geq d^2/N$ is known. This displays the same $N^{-1}$ behavior as $\beta_N^{(d)}$, but, as we will see in the present paper, this is not optimal, at least when $d\geq 3$.

In fact, our main result states that $\lim_{N\to\infty} N^{1-\frac2d} \kappa_N^{(d)}$ exists as a positive and finite number when $d\geq 3$, and gives an explicit expression for it in terms of a variational problem for functions on $\R^d$. 

This is the special case of a more general result, which concerns the inequality
\begin{equation}
	\label{eq:hardymanys}
	\sum_{n=1}^N \int_{\R^{dN}} |(-\Delta_n)^\frac s2 u(X)|^2 \,dX \geq \alpha_N^{(d,s)} \sum_{1\leq n<m\leq N} \int_{\R^{dN}} \frac{|u(X)|^2}{|X_n-X_m|^{2s}}\,dX
\end{equation}
for all $u\in\dot H^s(\R^{dN})$. Here $s$ is a real number satisfying $0<s<\frac d2$ and the operator $(-\Delta_n)^\frac s2$ acts on the $n$-th variable of $X=(X_1,\ldots,X_N)$ by multiplication by $|\xi_n|^s$ in Fourier space. The homogeneous Sobolev space $\dot H^s(\R^{dN})$ is the completion of $C^\infty_c(\R^{dN})$ with respect to the quadratic form on the left side of \eqref{eq:hardymanys}. It is relatively straightforward to see that
\begin{equation}
	\label{eq:tauspos}
	\inf_{N\geq 2} N \alpha_N^{(d,s)} > 0 \,.
\end{equation}
Indeed, for each pair $(n,m)$ with $n\neq m$ we have, by the ordinary fractional Hardy inequality (see Lemma \ref{hardydeltar} below),
\begin{equation}
	\label{eq:hardyfrac}
	\int_{\R^{d}} |(-\Delta_n)^\frac s2 u(X)|^2 \,dX_n \gtrsim \int_{\R^{d}} \frac{|u(X)|^2}{|X_n-X_m|^{2s}}\,dX_n \,.
\end{equation}
Integrating this inequality with respect to the remaining variables and summing over $n$ and $m$ gives \eqref{eq:tauspos}. As an aside, we mention that the optimal constant in \eqref{eq:hardyfrac} is known; see \cite{He,KoPeSe} and also \cite{Ya,FrSe}.

Our interest is again in the sharp constant in \eqref{eq:hardymanys} \emph{when restricted to antisymmetric functions}, that is, in
$$
\kappa_N^{(d,s)} := \inf_{\substack{0\neq u\in\dot H^s(\R^{dN}) \\ \text{antisymmetric}}} \frac{\sum_{n=1}^N \int_{\R^{dN}} |(\Delta_n)^\frac s2 u(X)|^2\,dX}{\sum_{1\leq n<m\leq N} \int_{\R^{dN}} \frac{|u(X)|^2}{|X_n-X_m|^{2s}}\,dX} \,.
$$

To state the limiting variational problem, we introduce for nonnegative, measurable functions $\rho$ on $\R^d$
$$
D_\lambda[\rho] := \frac12 \iint_{\R^d\times\R^d} \frac{\rho(x)\,\rho(x')}{|x-x'|^\lambda}\,dx\,dx' \,.
$$
Moreover, we denote 
$$
\tau_{d,s} := \inf_{0\leq\rho\in L^{1+\frac{2s}d}\cap L^1(\R^d)} \frac{\int_{\R^d} \rho(x)^{1+\frac{2s}d}\,dx \left( \int_{\R^d} \rho(x)\,dx \right)^{1-\frac{2s}d}}{D_{2s}[\rho]} \,.
$$
The fact that $\tau_d>0$ follows from the Hardy--Littlewood--Sobolev inequality \cite[Theorem 4.3]{LiLo}, together with H\"older's inequality. Finally, let
$$
c_{d,s}^{\rm TF} := \frac{(4\pi)^s}{1+\frac{2s}d} \, \Gamma(1+\tfrac d2)^\frac{2s}{d} \,.
$$
The superscript TF stands for `Thomas--Fermi' and it will become clear in the proof that this constant is related to the Thomas--Fermi approximation for the kinetic energy.

The following is our main result.

\begin{theorem}\label{main}
	Let $d\geq 1$ and $0<s<\frac d2$ with $s\leq 1$. Then
	\begin{equation}
		\label{eq:main}
			\lim_{N\to\infty} N^{1-\frac{2s}d} \, \kappa_N^{(d,s)} = \tau_{d,s} \, c_{d,s}^{\rm TF} \,.
	\end{equation}
\end{theorem}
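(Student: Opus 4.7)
The proof establishes matching upper and lower bounds on $N^{1-2s/d}\kappa_N^{(d,s)}$, each converging to $\tau_{d,s}c_{d,s}^{\rm TF}$, in the spirit of Lieb--Yau's analysis of the Chandrasekhar problem. The upper bound is produced by explicit trial Slater determinants adapted to a near-minimizer of the variational problem for $\tau_{d,s}$, and the lower bound is produced by combining a semiclassical Lieb--Thirring inequality with a Hartree-type upper bound on the pair interaction.

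\textbf{Upper bound.} Given $\epsilon>0$, pick a smooth, compactly supported $\rho_0\ge 0$ with $\int\rho_0=1$ and $\int\rho_0^{1+2s/d}/D_{2s}[\rho_0]<\tau_{d,s}+\epsilon$. Construct antisymmetric trial functions $u_N$ as Slater determinants of $N$ orthonormal orbitals $\phi_1,\ldots,\phi_N$ whose one-body density $\rho_N=\sum_i|\phi_i|^2$ approximates $N\rho_0$ in the semiclassical sense; a natural choice is the $N$ lowest eigenfunctions of a one-body operator $(-\Delta)^s+V_{\rm eff}$ with effective potential $V_{\rm eff}$ tuned so that Weyl asymptotics give $\rho_N\sim N\rho_0$. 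By semiclassical Weyl asymptotics, $T[u_N]=\Tr(-\Delta)^s\gamma_N\sim c_{d,s}^{\rm TF}N^{1+2s/d}\int\rho_0^{1+2s/d}$, where $\gamma_N=\sum_i|\phi_i\rangle\langle\phi_i|$. For a Slater determinant one has the explicit formula $\rho^{(2)}_{u_N}(x,y)=\tfrac12(\rho_N(x)\rho_N(y)-|\gamma_N(x,y)|^2)$, so
\[
I[u_N]=D_{2s}[\rho_N]-\frac12\iint\frac{|\gamma_N(x,y)|^2}{|x-y|^{2s}}\,dx\,dy\sim N^2 D_{2s}[\rho_0],
\]
the exchange correction being of lower order in $N$ by trace estimates on $\gamma_N$. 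Therefore $N^{1-2s/d}T[u_N]/I[u_N]\to c_{d,s}^{\rm TF}\int\rho_0^{1+2s/d}/D_{2s}[\rho_0]<c_{d,s}^{\rm TF}(\tau_{d,s}+\epsilon)$, and taking $\epsilon\to 0$ yields $\limsup_{N\to\infty} N^{1-2s/d}\kappa_N^{(d,s)}\le c_{d,s}^{\rm TF}\tau_{d,s}$.

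\textbf{Lower bound.} For any normalized antisymmetric $u$ with one-body density $\rho_u$ (so $\int\rho_u=N$), the plan is to combine two ingredients: a Lieb--Thirring inequality that is \emph{sharp in the semiclassical limit}, $T[u]\ge(c_{d,s}^{\rm TF}-o(1))\int\rho_u^{1+2s/d}\,dx$ as $N\to\infty$, together with a Hartree-type upper bound on the interaction, $I[u]\le(1+o(1))D_{2s}[\rho_u]$. The Hartree bound is exact for Slater determinants via $\rho^{(2)}\le\tfrac12\rho\otimes\rho$; for general antisymmetric $u$ one controls the correlation contribution via the positive-definite spectral decomposition of $|x-y|^{-2s}$ together with trace estimates on the reduced density matrices. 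Using the variational definition of $\tau_{d,s}$ applied to $\rho_u$, which gives $\int\rho_u^{1+2s/d}/D_{2s}[\rho_u]\ge\tau_{d,s}N^{2s/d-1}$ since $\int\rho_u=N$, one obtains
\[
\frac{T[u]}{I[u]}\ge\frac{(c_{d,s}^{\rm TF}-o(1))\int\rho_u^{1+2s/d}}{(1+o(1))D_{2s}[\rho_u]}\ge(c_{d,s}^{\rm TF}\tau_{d,s}-o(1))N^{2s/d-1},
\]
and hence $\liminf_{N\to\infty} N^{1-2s/d}\kappa_N^{(d,s)}\ge c_{d,s}^{\rm TF}\tau_{d,s}$.

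\textbf{Main obstacle.} The principal difficulty lies in the semiclassically sharp Lieb--Thirring inequality: the sharp Lieb--Thirring constant at fixed $N$ is a long-standing open conjecture, so one cannot simply invoke a sharp bound directly. The standard remedy is a phase-space localization scheme---partitioning $\R^d$ into cubes of side $\ell$ with $1\ll\ell\ll N^{1/d}$ (or, equivalently, using coherent states on this scale), applying a non-sharp Lieb--Thirring bound locally, and recovering the semiclassical constant $c_{d,s}^{\rm TF}$ in the limit $\ell\to\infty$ via Berezin--Lieb or IMS-type inequalities. A secondary technical point is the Hartree upper bound on $I[u]$ for non-Slater-determinant antisymmetric states, where one must carefully absorb exchange-correlation contributions into the sub-leading $o(D_{2s}[\rho_u])$ on near-minimizing sequences. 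The restriction $s\le 1$ is expected to enter through the convexity/positivity properties of $(-\Delta)^s$ used in these localization estimates.
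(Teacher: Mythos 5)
Your upper bound is broadly in the right spirit (Slater determinants whose one-body density approximates $N\rho_*$, with the exchange term shown to be subleading); the paper takes a somewhat different route through density matrices (Coleman's theorem plus a coherent-state operator $\gamma$ and a Lieb--Oxford-type bound, Proposition~\ref{indirect}), which avoids constructing actual eigenfunctions, but your approach could in principle be made to work. The genuine gap is in the lower bound.

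The claimed Hartree-type upper bound $I[u]\le(1+o(1))\,D_{2s}[\rho_u]$ for arbitrary antisymmetric $u$ is \emph{false}. The correct universal relation between the interaction energy and the Hartree term is a \emph{lower} bound, $I[u]\ge D_{2s}[\rho_u]-\mathrm{const}\int\rho_u^{1+2s/d}$ (a Lieb--Oxford-type estimate; this is Proposition~\ref{indirect} of the paper, and it is what is needed for the \emph{upper} bound on $\kappa_N$, not the lower). Antisymmetry forces the pair density to vanish on the diagonal, but it does not prevent the pair density from being much larger than $\tfrac12\rho_u\otimes\rho_u$ at intermediate separations; there is simply no bound of the form $\rho^{(2)}_u\lesssim\rho_u\otimes\rho_u$ for general fermionic states, and no spectral decomposition of the positive-definite kernel $|x-y|^{-2s}$ will produce one. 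You have thus mis-identified the principal obstacle: it is not the semiclassical sharpness of Lieb--Thirring (which can indeed be recovered via coherent states, as the paper does in Lemma~\ref{coherent}), but the absence of any direct mean-field upper bound on the repulsion. This is exactly what the L\'evy-Leblond method is designed to bypass: one partitions the $N$ particles into ``electron'' and ``nucleus'' groups with a tunable charge ratio, rewrites the all-particle repulsion as an average over such partitions of electron--nucleus \emph{attraction} minus nucleus--nucleus repulsion (equation~\eqref{eq:levyleblond}), proves a stability-of-matter-type electrostatic inequality for each partition (Proposition~\ref{electrostatic}, via the Fefferman--de la Llave decomposition), and then controls the resulting nearest-neighbor correction by the kinetic energy (Proposition~\ref{nearestneighbor}). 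Without this ingredient, or something replacing it, the lower-bound argument does not go through.
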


\begin{remarks}
	(a) This result in the special case $s=\frac12$, $d=3$ is due to Lieb and Yau \cite{LiYa1}, following earlier work by Lieb and Thirring \cite{LiTh84}. While our overall strategy is similar to theirs, there are some significant differences, which we explain below.\\
	(b) Our proof of the asymptotics \eqref{eq:main} comes with remainder bounds. We show that $N^{1-\frac{2s}d} \, \kappa_N^{(d,s)}$ is equal to its limit up to a relative error of $\mathcal O(N^{-\frac{s(d-2s)}{d^2}})$; see \eqref{eq:lower} and \eqref{eq:upper}.\\
	(c) We believe that Theorem \ref{main} remains valid without the extra assumption $s\leq 1$. This would probably require significant additional effort at various places and, since our main interest is the case $s=1$, we decided to impose this simplifying assumption.\\
	(d) Theorem \ref{main} extends to the case where spin is taken into account, except that the limiting expression in \eqref{eq:main} is multiplied by a power of the number of spin states. We refer to \cite{LiSe} for an explanation of this terminology and to \cite{LiYa1} for proofs where spin is taken into account.\\
	(e) Finding the asymptotic behavior of $\kappa_N^{(d,s)}$ in the case $d=2s$ is an \emph{open problem}. In Appendix \ref{sec:upper2d} we discuss a conjecture of what might be the right order and prove the corresponding upper bound.
\end{remarks}

Let us give some background on Theorem \ref{main} and explain some aspects of its proof. The basic idea is that it is a combined semiclassical and mean-field limit. Such a limit is behind what is called Thomas--Fermi approximation for Coulomb systems and has first been made rigorous by Lieb and Simon in \cite{LiSe}. Parts of this proof were simplified through the use of coherent states \cite{Th,Li} and the Lieb--Thirring inequality \cite{LiTh,Li}, and these will also play an important role for us. For a recent study of this combined semiclassical and mean-field limit for quite general systems we refer to \cite{FoLeSo}.

One difficulty that we face here, compared to the analysis of nonrelativistic Coulomb systems \cite{LiSi} or the systems in \cite{FoLeSo}, is that the kinetic energy and the potential energy scale in the same way, so that there is no natural scale. This problem was overcome by Lieb and Yau \cite{LiYa1}, following earlier work of Lieb and Thirring \cite{LiTh84}, in their rigorous derivation of Chandrasekhar theory of gravitational collapse of stars. In important ingredient in the proofs of \cite{LiTh84,LiYa1} and also in the more recent \cite{FoLeSo}, is the \emph{L\'evy-Leblond method}. This method will also play a crucial role in our proof. It consists in dividing the $N$ particles into two groups, treating one part as `electrons' and the other part as `nuclei'. The electrons repel each other, and similarly the nuclei, while electrons and nuclei attract each other. The construction involves a further, free parameter that corresponds to the quotient between the charges of the electrons and the nuclei. At the end one averages over all such partitions.

There is one important structural property, however, that Lieb and Yau can take advantage of and we cannot. They deal with the case $s=\frac12$, $d=3$, where the interaction potential $|x|^{-1}$ is, up to a constant, the fundamental solution of the Laplacian and the corresponding (sub/super)harmonicity properties enter into the proof of \cite[Lemma 1]{LiYa1}. The same phenomenon occurs, for instance, for $s=1$, $d=4$, or in general for $s=\frac{d-2}{2}$, $d\geq 3$, but in the general case the interaction $|x|^{-2s}$ is not harmonic outside of the origin. Therefore a substantial part of our effort goes into proving bounds for systems interacting through Riesz potentials $|x|^{-2s}$ for general exponents $0<2s<d$; see Propositions \ref{electrostatic} and \ref{indirect}. In both cases our proof relies on the Fefferman--de la Llave decomposition of this interaction potential. As a curiosity, we mention that also the Cwikel--Lieb--Rozenblum inequality, and therefore the Lieb--Thirring inequality, which is another important ingredient in our proof, can be established using the Fefferman--de la Llave decomposition \cite{Fr2}. For more on this decomposition, see also \cite{HaSe}.

Neither the L\'evy-Leblond method nor the Fefferman--de la Llave decomposition seem to work for $d\leq 2s$ and this case remains \emph{open} (except for $s=d=1$). In Appendix \ref{sec:upper2d} we give a suggestion of what might be the relevant mechanism in the borderline case $d=2s$.

Finally we mention that the results in this paper, with the exception of those in Section \ref{sec:lower}, are contained in a preprint with the same title, dated October 30, 2006, that was circulated among colleagues. The present paper corrects some minor mistakes therein and adds a proof of the sharp asymptotic lower bound.

\medskip

It is our pleasure to dedicate this paper to Brian Davies in admiration of his many profound contributions to spectral theory and mathematical physics and, in particular, to the topic of Hardy inequalities. Happy birthday, Brian!


\section{Lower bound}\label{sec:lower}

Our goal in this section is to prove the lower bound in Theorem \ref{main}. That is, we will show
$$
\liminf_{N\to\infty} N^{1-\frac{2s}d} \kappa_N^{(d,s)} \geq \tau_{d,s} \, c_{d,s}^{\rm TF} \,.
$$
More precisely, we will prove the following quantitative version of it,
\begin{equation}
	\label{eq:lower}
	N^{1-\frac2d} \kappa_N \geq \tau_{d,s} \, c_{d,s}^{\rm TF} \left( 1 - \const N^{-\frac{s(d-2s)}{d^2}} \right).
\end{equation}
As explained in the introduction, we mostly follow the method in \cite{LiYa1}, but an important new ingredient, which replaces their \cite[Equation (2.21)]{LiYa1}, is the electrostatic inequality in Proposition \ref{electrostatic}.


\subsection{An electrostatic inequality}\label{sec:electrostatic}

For a probability measure $\mu$ on $\R^{dM}$ we denote by $\rho_\mu$ the nonnegative measure on $\R^d$ obtained by summing the marginals of $\mu$. That is, for any bounded continuous function $f$ on $\R^d$ we have
$$
\sum_{m=1}^M \int_{\R^{dN}} f(Y_m) \,d\mu(Y) = \int_{\R^d} f(y)\,d\rho_\mu(y) \,.
$$
The definition of $D_\lambda$ is extended to nonnegative measures on $\R^d$ in a natural way, namely, by
$$
D[\nu] := \frac12 \iint_{\R^d\times\R^d} \frac{d\nu(y)\,d\nu(y')}{|y-y'|^\lambda} \,.
$$

Next, for $R=(R_1,\ldots,R_K)\in\R^{dK}$ and $y\in\R^d$, we set
$$
\delta_R(y):= \min_{k=1,\ldots, K} |y-R_k| \,.
$$

\begin{proposition}\label{electrostatic}
	Let $d\geq 1$ and $0<\lambda<d$. Then for any $M,K\in\N$, $R=(R_1,\ldots,R_K)\in\R^{dK}$, $Z>0$ and any probability measure $\mu$ on $\R^{dM}$
	$$
	\int_{\R^{dM}} \sum_{m,k} \frac{Z}{|Y_m-R_k|^\lambda}\,d\mu(Y)  -  \sum_{k<l} \frac{Z^2}{|R_k-R_l|^\lambda} - D_\lambda[\rho_\mu] \lesssim Z \int_{\R^d} \frac{d\rho_\mu(y)}{\delta_R(y)^\lambda} \,,
	$$
	where the implied constant depends only on $d$ and $\lambda$.
\end{proposition}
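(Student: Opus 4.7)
The plan is to reduce the inequality to a pointwise estimate by means of the Fefferman--de la Llave decomposition,
$$
|x-y|^{-\lambda} = c_{d,\lambda} \int_0^\infty\!\int_{\R^d} \1_{B(z,r)}(x)\,\1_{B(z,r)}(y)\, \frac{dz\, dr}{r^{d+1+\lambda}},
$$
which holds for every $0<\lambda<d$ with a positive constant $c_{d,\lambda}$. Applying this to each of the three pair-interaction terms on the left-hand side, and setting $n_R(z,r):=\sum_k \1_{B(z,r)}(R_k)$ and $f_\mu(z,r):=\rho_\mu(B(z,r))$, the attraction becomes $c_{d,\lambda} Z \int n_R f_\mu \, r^{-(d+1+\lambda)} dz\, dr$, the nuclear repulsion becomes $\tfrac{1}{2} c_{d,\lambda} Z^2 \int (n_R^2-n_R)\, r^{-(d+1+\lambda)} dz\, dr$, and $D_\lambda[\rho_\mu] = \tfrac{1}{2} c_{d,\lambda}\int f_\mu^2\, r^{-(d+1+\lambda)} dz\, dr$. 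Hence the left-hand side of the proposition equals $c_{d,\lambda}\int I(z,r)\, r^{-(d+1+\lambda)} dz\, dr$ with the pointwise integrand
$$
I(z,r) := Z n_R f_\mu - \tfrac{Z^2}{2}(n_R^2-n_R) - \tfrac{1}{2} f_\mu^2.
$$

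The crucial step is the pointwise estimate $I(z,r)\le Z f_\mu(z,r)\,\1_{n_R(z,r)\ge 1}$, which I would obtain by completing the square around $n_R-1$ rather than around $n_R$:
$$
I = Z f_\mu - \tfrac{Z^2}{2}(n_R-1) - \tfrac{1}{2}\bigl(Z(n_R-1)-f_\mu\bigr)^2.
$$
When $n_R\ge 1$ both subtracted terms are nonnegative, yielding $I\le Z f_\mu$; when $n_R=0$ the expression collapses to $I = -\tfrac{1}{2}f_\mu^2\le 0$. Note that the obvious rearrangement $I = \tfrac{Z^2}{2} n_R - \tfrac{1}{2}(Z n_R - f_\mu)^2$, giving only $I \le \tfrac{Z^2}{2} n_R$, is useless because the resulting $(z,r)$-integral diverges; the rearrangement above is precisely what routes the formally divergent nuclear self-energy into the negative quadratic.

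Inserting this bound and using Fubini, I would be reduced to proving the uniform geometric estimate
$$
J(y) := \int_0^\infty\!\int_{\R^d} \1_{|z-y|<r}\,\1_{\delta_R(z)<r}\, \frac{dz\, dr}{r^{d+1+\lambda}} \;\lesssim\; \delta_R(y)^{-\lambda}.
$$
Evaluating the $r$-integral first gives $J(y) = (d+\lambda)^{-1}\int \max(|z-y|,\delta_R(z))^{-(d+\lambda)}\, dz$, and I would then split the $z$-integral at $|z-y|=\delta_R(y)/2$. On the inner ball the triangle inequality forces $\delta_R(z)\ge\delta_R(y)/2$, while on the complement $|z-y|\ge\delta_R(y)/2$ directly; in each region the integrand is dominated by $(\delta_R(y)/2)^{-(d+\lambda)}$ on a set of the appropriate volume, which is enough.

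The main obstacle is the pointwise step. The whole proof succeeds because the indicator $\1_{n_R\ge 1}$ produced by the correct rearrangement of the square acts as a \emph{nucleus-cutoff}: it is exactly this indicator, and not the larger quantity $\sum_k \1_{B(z,r)}(R_k)$, that forces the geometric bound $J(y)\lesssim\delta_R(y)^{-\lambda}$ rather than the weaker $J(y)\lesssim \sum_k|y-R_k|^{-\lambda}$ that would give nothing beyond the trivial inequality. Once the pointwise bound is identified, the remaining estimate on $J(y)$ is a routine computation of the type standard in arguments based on the Fefferman--de la Llave decomposition.
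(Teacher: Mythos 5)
Your proposal is correct and follows the same strategy as the paper: apply the Fefferman--de la Llave decomposition, establish a pointwise inequality ball by ball, and then bound the resulting geometric integral by $\delta_R(y)^{-\lambda}$. The only cosmetic difference is in the algebra of the pointwise step, where you complete the square around $n_R-1$ directly, while the paper completes the square around the electron count and uses $K'-\sqrt{K'(K'-1)}\le 1$; both yield the same bound $I\le Z f_\mu\,\1(n_R\ge 1)$, and the concluding geometric computation is likewise routine in either form.
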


The following proof uses some ideas from that of \cite[Corollary 1]{FedlL}.

\begin{proof}
	According to the Fefferman--de la Llave formula \cite{FedlL}, we have for all $y,y'\in\R^d$
	\begin{equation}
		\label{eq:fdll}
		\frac{1}{|y-y'|^\lambda} = \const \int_0^\infty \frac{dr}{r^{d+\lambda+1}}\, \int_{\R^d} da \, \1_{B_r(a)}(y) \, \1_{B_r(a)}(y')
	\end{equation}
	with a constant depending only on $d$ and $\lambda$. This implies that
	\begin{align*}
		& \int_{\R^{dM}} \sum_{m,k} \frac{Z}{|Y_m-R_k|^\lambda}\,d\mu(Y) -  \sum_{k<l} \frac{Z^2}{|R_k-R_l|^\lambda} - D_\lambda[\rho_\mu] \\
		& = \const \int_0^\infty \frac{dr}{r^{d+\lambda+1}}\, \int_{\R^d} da \left(
		\int_{\R^{dM}} Z \sum_{m,k} \1_{B_r(a)}(Y_m) \, \1_{B_r(a)}(R_k) \,d\mu(Y) \right.\\
		& \qquad\qquad\qquad\qquad\qquad\qquad - Z^2 \sum_{k<l} \1_{B_r(a)}(R_k) \, \1_{B_r(a)}(R_l) \\ 
		& \qquad\qquad\qquad\qquad\qquad\qquad \left. - \frac12 \iint_{\R^d\times\R^d} \rho_\mu(y) \1_{B_r(a)}(y) \, \1_{B_r(a)}(y') \rho_\mu(y')  \right) \\
		& = \const \int_0^\infty \frac{dr}{r^{d+\lambda+1}}\, \int_{\R^d} da \left( Z n_{B_r(a)} K_{B_r(a)} - \frac12 Z^2 K_{B_r(a)} ( K_{B_r(a)} - 1) - \frac12 n_{B_r(a)}^2 \right),
	\end{align*}
	where we have introduced, for any ball $B$,
	\begin{align*}
		n_B := \rho_\mu(B)
		\qquad\text{and}\qquad
		K_B := \sum_k \1_B(R_k) \,.
	\end{align*}
	Note that $K_B$ is a nonnegative integer. We claim that for any $n\geq 0$ and any $K' \in\N_0$,
	$$
	Z n K' - \frac12 Z^2 K'(K'-1) - \frac12 n^2 \leq Z n \, \1(K'\geq 1) \,.
	$$
	Indeed, this is true when $K'=0$, and when $K'\geq 1$, we write the left side as
	$$
	- \frac12 (n - Z \sqrt{K'(K'-1)})^2 + nZ \left( K'- \sqrt{K'(K'-1)} \right)
	$$
	and bound $K'- \sqrt{K'(K'-1)} \leq 1$. 
	
	Thus, we have shown that
	 \begin{align*}
	 	& \int_{\R^{dM}} \sum_{m,k} \frac{Z}{|Y_m-R_k|^\lambda}\,d\mu(Y) -  \sum_{k<l} \frac{Z^2}{|R_k-R_l|^\lambda} - D_\lambda[\rho_\mu] \\
	 	& \leq \const Z \int_0^\infty \frac{dr}{r^{d+\lambda+1}}\, \int_{\R^d} da \, n_{B_r(a)} \1(K_{B_r(a)}\geq 1) \\
	 	& = \const Z \int_{\R^d} d\rho_\mu(y)\, \int_0^\infty \frac{dr}{r^{d+\lambda+1}}\, \int_{\R^d} da\, \1_{B_r(a)}(y) \, \1(K_{B_r(a)}\geq 1) \,.
	 \end{align*}
	 Next, we bound
	 $$
	 \1_{B_r(a)}(y) \, \1(K_{B_r(a)}\geq 1) \leq \1_{B_r(a)}(y) \, \1(\delta_R(y)< 2r) \,.
	 $$
	 Indeed, when the left side does not vanish, we have $|y-a|<r$ and there is a $k\in\{1,\ldots,K\}$ such that $|R_k-a|<r$. Consequently $\delta_R(y) \leq |y-R_k|< 2r$. 
	 
	 By performing first the $a$ and then the $r$ integration, we obtain for each $y\in\R^d$,
	 \begin{align*}
	 	\int_0^\infty \frac{dr}{r^{d+\lambda+1}} \int_{\R^d} da \, \1_{B_r(a)}(y) \, \1(K_{B_r(a)}\geq 1)
	 	& \leq \int_0^\infty \frac{dr}{r^{d+\lambda+1}} \int_{\R^d} da \, \1_{B_r(a)}(y) \, \1(\delta_R(y)< 2r) \\
	 	& = \const \int_0^\infty \frac{dr}{r^{\lambda+1}}\, \1(\delta_R(y)< 2r) \\
	 	&= \const \frac{1}{\delta_R(y)^\lambda} \,.
	 \end{align*}
	 This implies the claimed inequality.	 
\end{proof}


\subsection{Lieb--Thirring inequality}

Associated to a normalized function $\psi\in L^2(\R^{dM})$ is a probability measure $d\mu(Y)=|\psi(Y)|^2\,dY$ and therefore we can consider the measure $d\rho_\mu$ on $\R^d$ as in the previous subsection. In the present case, this measure turns out to be absolutely continuous and we denote its density by $\rho_\psi$. Explicitly,
$$
\rho_\psi(y) := \sum_{m=1}^M \int_{\R^{3(M-1)}} |\psi(\ldots,Y_{m-1},y,Y_{m+1},\ldots)|^2 \,dY_1 \ldots dY_{n-1}\,dY_{n+1}\ldots dY_M \,.
$$
This density appears in the following famous Lieb--Thirring inequality.

\begin{lemma}\label{lt}
	Let $d\geq 1$ and $s>0$. Then for any $M\in\N$ and any antisymmetric and $L^2$-normalized $\psi\in H^s(\R^{dM})$,
	$$
	\sum_{m=1}^M \int_{\R^{dM}} |(-\Delta_m)^\frac s2\psi|^2\,dY \gtrsim \int_{\R^d} \rho_\psi^{1+\frac{2s}d}\,dy \,,
	$$
	where the implied constant depends only on $d$ and $s$.
\end{lemma}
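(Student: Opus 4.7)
The strategy is to reduce the $M$-particle estimate to a one-body inequality via the reduced density matrix formalism, which is the standard route to Lieb--Thirring bounds in density form. I introduce the one-particle density matrix $\gamma_\psi$ of $\psi$, the self-adjoint, trace-class operator on $L^2(\R^d)$ with integral kernel
$$
\gamma_\psi(y,y') := M \int_{\R^{d(M-1)}} \overline{\psi(y',Y_2,\ldots,Y_M)}\,\psi(y,Y_2,\ldots,Y_M)\,dY_2\cdots dY_M \,.
$$
Its (distributional) diagonal recovers $\rho_\psi$, and the antisymmetry of $\psi$ together with $\|\psi\|_{L^2}=1$ implies the Pauli bound $0 \leq \gamma_\psi \leq 1$ as operators with $\Tr \gamma_\psi = M$. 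Using the symmetry of $|\psi|^2$ in its arguments, expanding the kinetic functional yields the identity
$$
\sum_{m=1}^M \int_{\R^{dM}} |(-\Delta_m)^{\frac s2}\psi|^2\,dY = \Tr((-\Delta)^s \gamma_\psi) \,.
$$

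With this reformulation, the lemma reduces to the \emph{kinetic} (or dual) Lieb--Thirring inequality: for every self-adjoint $\gamma$ on $L^2(\R^d)$ with $0 \leq \gamma \leq 1$ and density $\rho_\gamma$,
$$
\Tr((-\Delta)^s \gamma) \gtrsim \int_{\R^d} \rho_\gamma(y)^{1+\frac{2s}{d}}\,dy \,.
$$
I would derive this by Legendre duality from the spectral Lieb--Thirring bound
$$
\Tr((-\Delta)^s - V)_- \lesssim \int_{\R^d} V(y)_+^{1+\frac{d}{2s}}\,dy \,.
$$
Indeed, the variational characterization $-\Tr((-\Delta)^s - V)_- = \inf_{0\leq\gamma\leq 1} \Tr(((-\Delta)^s - V)\gamma)$, applied to our $\gamma$, gives $\Tr((-\Delta)^s\gamma) \geq \int V \rho_\gamma - C \int V_+^{1+d/(2s)}$; then optimizing pointwise in $V \geq 0$ with the choice $V(y) \propto \rho_\gamma(y)^{2s/d}$ yields the stated density power on the right-hand side.

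The main obstacle is thus the spectral Lieb--Thirring inequality for the fractional kinetic operator $(-\Delta)^s$ in the full range of dimensions and exponents. This is a classical result by now; as the authors themselves emphasize in the introduction, it (along with the associated Cwikel--Lieb--Rozenblum bound) can be established through the Fefferman--de la Llave decomposition, referring to \cite{Fr2}. Once this spectral input is granted, the derivation of the lemma is essentially a mechanical application of the duality argument sketched above, together with the one-body reduction via $\gamma_\psi$.
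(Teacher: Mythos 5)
Your argument is correct, but note that the paper does not actually write out a proof of this lemma: it treats it as a classical fact, citing Lieb--Thirring \cite{LiTh} for $s=1$ with the remark that the original proof ``generalizes readily to the full regime $s>0$,'' along with \cite{LiSe}, \cite{FrLaWe}, and \cite{Fr}. The route you take --- passage to the one-particle density matrix $\gamma_\psi$ with the Pauli constraint $0\leq\gamma_\psi\leq 1$ coming from antisymmetry, the identity $\sum_m\|(-\Delta_m)^{s/2}\psi\|^2=\Tr((-\Delta)^s\gamma_\psi)$, and then Legendre duality against the spectral Lieb--Thirring bound $\Tr((-\Delta)^s-V)_-\lesssim\int V_+^{1+d/(2s)}$ with the optimal choice $V\propto\rho_\gamma^{2s/d}$ --- is exactly the standard derivation appearing in those references, so you are supplying the details the paper chooses to omit rather than departing from its approach.
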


For $s=1$, this inequality is due to Lieb and Thirring \cite{LiTh}. Their original proof generalizes readily to the full regime $s>0$; see also \cite[Chapter 4]{LiSe}, as well as \cite[Theorem 4.60 and Section 7.4]{FrLaWe} and \cite{Fr}. For the currently best known values of the constants, see \cite{FrHuJeNa}.


\subsection{Coherent states}

The following lemma is a rigorous version of the Thomas--Fermi approximation for the kinetic energy. It is proved with the help of coherent states.

\begin{lemma}\label{coherent}
	Let $d\geq 1$ and $0< s\leq 1$. Let $g\in H^s(\R^d)$ be $L^2$-normalized and, when $s>\frac12$, assume that $|\widehat g|$ is even under $\xi\mapsto-\xi$. Then for any antisymmetric, $L^2$-normalized $\psi\in H^s(\R^{dM})$,
	$$
	\sum_{m=1}^M \int_{\R^{dM}} |(-\Delta_m)^\frac s2 \psi|^2\,dY \geq c^{\rm TF}_{d,s} \int_{\R^d} (\rho_\psi * |g|^2)^{1+\frac{2s}d}\,dy - M \|(-\Delta)^\frac s2 g\|^2 \,.
	$$
\end{lemma}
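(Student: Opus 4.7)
\medskip

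\noindent\textbf{Proof plan (Lemma \ref{coherent}).} The plan is to run a standard coherent-states argument in phase space, but with the sharp operator inequality tailored to fractional kinetic energies $(-\Delta)^s$ for $0<s\le1$. First I would pass to the one-body reduced density matrix $\gamma$ of $\psi$: because $\psi$ is antisymmetric and $L^2$-normalized, $\gamma$ satisfies $0\le\gamma\le 1$ (Pauli principle) and $\Tr\gamma=M$, and the left-hand side of the claimed inequality equals $\Tr((-\Delta)^s\gamma)$. I would also record that the spatial density is $\rho_\psi(y)=\gamma(y,y)$.

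Next I would introduce the coherent states $f_{p,q}(x):=e^{ip\cdot x}g(x-q)$, which (since $\|g\|_2=1$) satisfy the resolution of identity $\int|f_{p,q}\rangle\langle f_{p,q}|\,dp\,dq/(2\pi)^d=I$, and the Husimi-type phase space density $\rho(p,q):=\langle f_{p,q},\gamma f_{p,q}\rangle$. Two properties are crucial: (i) $0\le\rho(p,q)\le 1$, as a consequence of $0\le\gamma\le 1$ and $\|f_{p,q}\|=1$; and (ii) integrating in $p$ yields $\int\rho(p,q)\,dp/(2\pi)^d=(\rho_\psi*|g|^2)(q)$, by a direct computation with Plancherel.

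The key step is a sharp operator inequality for the kinetic energy. Define the Fourier multiplier
\[
B:=\int|p|^{2s}\,|f_{p,q}\rangle\langle f_{p,q}|\,\frac{dp\,dq}{(2\pi)^d},
\]
whose symbol is $b(\xi)=\int|\xi-\eta|^{2s}|\hat g(\eta)|^2\,d\eta/(2\pi)^d$. I claim that $b(\xi)\le|\xi|^{2s}+\|(-\Delta)^{s/2}g\|^2$, i.e.\ $B\le(-\Delta)^s+\|(-\Delta)^{s/2}g\|^2$. For $0<s\le\tfrac12$ this follows from subadditivity $|\xi-\eta|^{2s}\le|\xi|^{2s}+|\eta|^{2s}$. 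For $\tfrac12<s\le 1$ the evenness assumption on $|\hat g|$ lets me symmetrize:
\[
b(\xi)=\int\frac{|\xi-\eta|^{2s}+|\xi+\eta|^{2s}}{2}|\hat g(\eta)|^2\,\frac{d\eta}{(2\pi)^d},
\]
and setting $a=|\xi-\eta|^2,\,b=|\xi+\eta|^2$ gives $(a+b)/2=|\xi|^2+|\eta|^2$, so concavity of $t\mapsto t^s$ yields $\tfrac12(a^s+b^s)\le((a+b)/2)^s\le|\xi|^{2s}+|\eta|^{2s}$. This is the step where I expect the only real subtlety to lie, and it is exactly the reason for the evenness hypothesis. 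Taking the trace against $\gamma$ and using $\Tr\gamma=M$ gives
\[
\Tr((-\Delta)^s\gamma)\ge\int|p|^{2s}\rho(p,q)\,\frac{dp\,dq}{(2\pi)^d}-M\|(-\Delta)^{s/2}g\|^2.
\]

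Finally I would implement the semiclassical Thomas--Fermi step. For fixed $q$, I minimize $\int|p|^{2s}\rho(p,q)\,dp/(2\pi)^d$ over $0\le\rho(\cdot,q)\le 1$ with the fixed marginal $\int\rho(\cdot,q)\,dp/(2\pi)^d=(\rho_\psi*|g|^2)(q)$; the bathtub principle selects the Fermi sea $\rho(p,q)=\1(|p|\le P(q))$, whose radius is determined by the marginal constraint. A direct computation of $\int_{|p|\le P}|p|^{2s}\,dp/(2\pi)^d$ in terms of $|B_1|=\pi^{d/2}/\Gamma(1+d/2)$ yields precisely the constant $c^{\rm TF}_{d,s}=(4\pi)^s\Gamma(1+d/2)^{2s/d}/(1+\tfrac{2s}{d})$ multiplying $(\rho_\psi*|g|^2)(q)^{1+2s/d}$. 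Integrating over $q$ completes the proof.
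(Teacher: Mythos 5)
Your proposal is correct and follows essentially the same route as the paper: the heart of the matter is the symbol estimate $\int |\xi+\zeta|^{2s}|\widehat g(\zeta)|^2\,d\zeta \leq |\xi|^{2s} + \|(-\Delta)^{\frac s2}g\|^2$, proved by subadditivity for $s\leq\frac12$ and, for $\frac12<s\leq1$, by symmetrizing with the evenness of $|\widehat g|$ and the same elementary inequality $\tfrac12(|\xi+\zeta|^{2s}+|\xi-\zeta|^{2s})\leq(|\xi|^2+|\zeta|^2)^s\leq|\xi|^{2s}+|\zeta|^{2s}$ that the paper uses (your concavity argument for the first half is a perfectly good, slightly slicker, substitute for the paper's maximization of $(1+t)^s+(1-t)^s$). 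The remaining ingredients you spell out (reduction to the one-particle density matrix with $0\leq\gamma\leq1$, the Husimi bounds, and the bathtub principle yielding $c^{\rm TF}_{d,s}$) are exactly the steps the paper delegates to Lieb--Yau's Lemma B.3; only mind the harmless convention point, also flagged in the paper, of using $g(q-x)$ rather than $g(x-q)$ so that the momentum marginal is literally $\rho_\psi*|g|^2$.
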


Here $\widehat g(\xi) = (2\pi)^{-\frac d2} \int_{\R^d} e^{-i\xi\cdot x} g(x)\,dx$ denotes the Fourier transform of $g$.

Bounds of the same type as in the lemma appear in \cite[Eqs.\ (5.14)--(5.22)]{Li} in the special case $s=1$ and $d=3$; a general version is formulated in \cite[Lemma B.4]{LiYa1}. Because of a subtlety in the application of that lemma, we sketch the proof.

\begin{proof}
	We will show that for any $L^2$-normalized $v\in H^s(\R^d)$, one has
	\begin{equation}
		\label{eq:coherentbound}
		\|(-\Delta)^\frac s2 v \|^2 \geq \iint_{\R^d\times\R^d} |\eta|^{2s} |\widetilde v(\eta,y) |^2 \, \frac{d\eta\,dy}{(2\pi)^d} - \|(-\Delta)^\frac s2 g\|^2 \,,
	\end{equation}
	where
	$$
	\widetilde v(\eta,y) := \int_{\R^d} e^{-i\eta\cdot x} \overline{g(y-x)} v(x)\,dx \,.
	$$
	(Compared to \cite[Section 12.7]{LiLo} and other presentations, we find it convenient to use $y-x$ instead of $x-y$ in the definition of $\widetilde v$.) Once \eqref{eq:coherentbound} is shown, the inequality in the lemma follows as in \cite[Lemma B.3]{LiYa1}.
	
	To prove \eqref{eq:coherentbound}, we observe that
	$$
	\int_{\R^d} |\widetilde v(\eta,y) |^2 \,dy = (2\pi)^d \int_{\R^d} |\widehat v(\xi)|^2 |\widehat g(\eta-\xi)|^2\,d\xi \,.
	$$
	We multiply this identity by $|\eta|^{2s}$ and integrate with respect to $\eta$. 
	
	In case $s\leq\frac12$ we use the subadditivity of $t\mapsto t^{2s}$ to bound on the right side
	$$
	|\eta|^{2s} \leq |\xi|^{2s} + |\eta - \xi|^{2s}
	$$
	and obtain the claimed inequality \eqref{eq:coherentbound}. (This is essentially the argument in \cite[Lemma B.3]{LiYa1}.) 
	
	In case $\frac12<s\leq 1$ we use the evenness of $|\widehat g|$ to replace $|\eta|^{2s}= |\xi+(\eta-\xi)|^{2s}$ by $\frac12(|\xi+(\eta-\xi)|^{2s}+ |\xi-(\eta-\xi)|^{2s}$. We then apply the elementary inequality
	\begin{equation}
		\label{eq:elementary}
		\frac12 ( |\xi+\zeta|^{2s} +|\xi-\zeta|^{2s}) \leq (|\xi|^2 + |\zeta|^2)^s \leq |\xi|^{2s} + |\zeta|^{2s} \,.
	\end{equation}
	and argue similarly as for $s\leq\frac12$ to obtain the claimed inequality \eqref{eq:coherentbound}. The second inequality in \eqref{eq:elementary} follows from the subadditivity of $t\mapsto t^s$, and to prove the first inequality we write
	$$
	\frac12 \left( |\xi+\zeta|^{2s} +|\xi-\zeta|^{2s} \right) = \frac12 \left( (1+t_*)^s + (1-t_*)^s \right)  \left(|\xi|^2 + |\zeta|^2 \right)^s
	$$
	with $t_* = 2\xi\cdot\zeta/(|\xi|^2+|\zeta|^2)\in[-1,1]$ and note that $[-1,1]\ni t\mapsto (1+t)^s + (1-t)^s$ attains its maximum at $t=0$.
\end{proof}

We expect a similar bound as in Lemma \ref{coherent} told for $s>1$ as well, but the structure of the remainder term will probably more complicated.


\subsection{Summary so far}

Let us combine the bounds from this section.

\begin{corollary}\label{summary}
	Let $d\geq 1$, $0<s<\frac d2$ with $s\leq 1$, $M\in\N$, $K\geq 2$, $R=(R_1,\ldots,R_K)\in\R^{dK}$ and $Z>0$. Then for any antisymmetric and $L^2$-normalized $\psi\in H^s(\R^{dM})$,
	\begin{align*}
		& \left\langle \psi, \sum_{m,k} \frac{Z}{|Y_m-R_k|^{2s}} \psi \right\rangle -  \sum_{k<l} \frac{Z^2}{|R_k-R_l|^{2s}} \\
		& \leq \left( 1 + \const M^{-\frac{s(d-2s)}{d^2}} \right) (\tau_{d,s} \, c_{d,s}^{\rm TF})^{-1} M^{1-\frac {2s}d} \sum_{m=1}^M \int_{\R^{dM}} |(-\Delta_m)^\frac s2 \psi|^2\,dY \\
		& \quad 
		+ \const Z \int_{\R^d} \frac{\rho_\psi(y)}{\delta_R(y)^{2s}}\,dy \,.
	\end{align*}
\end{corollary}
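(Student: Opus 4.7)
The plan is to chain together the three ingredients of this section --- the electrostatic inequality (Proposition \ref{electrostatic}), the Lieb--Thirring inequality (Lemma \ref{lt}), and the coherent-state Thomas--Fermi bound (Lemma \ref{coherent}) --- together with the variational characterization of $\tau_{d,s}$. We first apply Proposition \ref{electrostatic} with $\lambda = 2s$ to the probability measure $d\mu(Y) = |\psi(Y)|^2\,dY$, so that $\rho_\mu = \rho_\psi$ and the integral of the attractive sum against $\mu$ coincides with the quantum-mechanical expectation on the left side of the corollary. This gives
$$
\left\langle \psi, \sum_{m,k}\frac{Z}{|Y_m-R_k|^{2s}}\psi\right\rangle - \sum_{k<l}\frac{Z^2}{|R_k-R_l|^{2s}} \leq D_{2s}[\rho_\psi] + \const Z\int_{\R^d}\frac{\rho_\psi(y)}{\delta_R(y)^{2s}}\,dy.
$$
Since $\int\rho_\psi = M$, the variational definition of $\tau_{d,s}$ applied to $\rho_\psi$ yields
$$
D_{2s}[\rho_\psi] \leq \tau_{d,s}^{-1} M^{1-\frac{2s}{d}}\int \rho_\psi^{1+\frac{2s}{d}}\,dy,
$$
so it remains to prove the sharp semiclassical $L^{1+2s/d}$ estimate
$$
\int \rho_\psi^{1+\frac{2s}{d}}\,dy \leq (c_{d,s}^{\mathrm{TF}})^{-1}\left(1 + \const M^{-\frac{s(d-2s)}{d^2}}\right)\sum_m\int|(-\Delta_m)^{s/2}\psi|^2\,dY.
$$

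For this, we introduce a length scale $b>0$ and, for a fixed admissible $g$ as in Lemma \ref{coherent}, set $g_b(x):=b^{-d/2}g(x/b)$, so that $\|(-\Delta)^{s/2}g_b\|^2 = b^{-2s}\|(-\Delta)^{s/2}g\|^2$ while $|g_b|^2$ is a probability density concentrated on scale $b$. Lemma \ref{coherent} applied with $g_b$ gives
$$
\sum_m\int|(-\Delta_m)^{s/2}\psi|^2 \geq c_{d,s}^{\mathrm{TF}}\int(\rho_\psi*|g_b|^2)^{1+\frac{2s}{d}}\,dy - Mb^{-2s}\|(-\Delta)^{s/2}g\|^2.
$$
Writing $p := 1+\tfrac{2s}{d}$ and observing that $\int(\rho_\psi*|g_b|^2)^p \leq \int\rho_\psi^p$ by Jensen's inequality, the problem reduces to a quantitative control of the Jensen gap $\int\rho_\psi^p - \int(\rho_\psi*|g_b|^2)^p$ in terms of $b$. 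The pointwise convexity estimate $|a^p-c^p|\leq p\max(a,c)^{p-1}|a-c|$ together with H\"older's inequality bounds this gap by a product of $\bigl(\int\rho_\psi^p\bigr)^{(p-1)/p}$ and the $L^p$ modulus of smoothness $\|\rho_\psi - \rho_\psi*|g_b|^2\|_p$; the first factor is absorbed using Lemma \ref{lt}, and the second is estimated by a small power of $b$ through interpolation combined with Lieb--Thirring.

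Finally, we optimize $b$ to balance the coherent-state remainder $Mb^{-2s}$ against the Jensen-gap contribution; the resulting optimal choice produces the advertised relative error $M^{-s(d-2s)/d^2}$. The principal obstacle is the quantitative control of the Jensen gap: no pointwise regularity of $\rho_\psi$ is available for general fractional $s\in(0,1]$ (no analog of the Hoffmann--Ostenhof inequality), so the required smoothing estimate must be extracted from the a priori $L^{1+2s/d}$ bound on $\rho_\psi$ provided by Lemma \ref{lt} alone.
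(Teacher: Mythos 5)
Your first step (Proposition \ref{electrostatic} applied to $d\mu=|\psi|^2\,dY$) and the idea of eventually using Lemma \ref{coherent} with a scaled $g$ are the right ingredients, but the way you chain them together creates a gap that cannot be closed. You apply the variational definition of $\tau_{d,s}$ directly to $\rho_\psi$, and thereby reduce the Corollary to the intermediate estimate
\[
\int_{\R^d}\rho_\psi^{1+\frac{2s}d}\,dy\leq (c^{\rm TF}_{d,s})^{-1}\bigl(1+\const M^{-\frac{s(d-2s)}{d^2}}\bigr)\sum_{m=1}^M\int_{\R^{dM}}|(-\Delta_m)^{\frac s2}\psi|^2\,dY .
\]
This cannot be proved by the means you describe --- and, uniformly in $\psi$, it is in fact equivalent to the sharp Lieb--Thirring conjecture. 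Indeed, fix any $M_0$-particle near-optimizer $\psi_0$ of the Lieb--Thirring kinetic-energy quotient and form the antisymmetrized product of $k$ widely separated translates; as the separation tends to infinity, both $\int\rho^{1+2s/d}$ and the kinetic energy become additive, so the quotient is preserved while $M=kM_0\to\infty$. The error factor $(1+\const M^{-\alpha})\to 1$ then forces the sharp Lieb--Thirring constant to equal $c^{\rm TF}_{d,s}$, which is a famous open conjecture for $s=1$, $d\geq3$ and is believed false in low dimensions. The issue surfaces precisely at what you call the Jensen gap: Lemma \ref{lt} yields only $L^{1+2s/d}$ control on $\rho_\psi$, with no modulus of smoothness, so $\|\rho_\psi-\rho_\psi*|g_b|^2\|_{1+2s/d}$ admits no quantitative rate in $b$; ``interpolation combined with Lieb--Thirring'' does not produce one.

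The paper's proof sidesteps this entirely by applying the variational definition of $\tau_{d,s}$ to the \emph{smoothed} density $\rho_\psi*|g|^2$ rather than to $\rho_\psi$. Lemma \ref{coherent} then gives the sharp Thomas--Fermi constant directly for $\int(\rho_\psi*|g|^2)^{1+2s/d}$ --- which is the quantity the coherent-state bound actually controls --- and what is left to estimate is the difference $D_{2s}[\rho_\psi]-D_{2s}[\rho_\psi*|g|^2]$. This is a bilinear expression in $\rho_\psi$ against the explicit kernel $|x|^{-2s}-|g|^2*|x|^{-2s}*|g|^2$, so Young's convolution inequality bounds it by $\tfrac12\|\rho_\psi\|_{1+2s/d}^2\,\||x|^{-2s}-|g|^2*|x|^{-2s}*|g|^2\|_{(d+2s)/(4s)}$. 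The kernel norm is independent of $\psi$ and scales like $\ell^{2s(d-2s)/(d+2s)}$ under $g=\ell^{-d/2}G(\ell^{-1}\cdot)$, while the factor $\|\rho_\psi\|_{1+2s/d}^2$ is absorbed by the \emph{non-sharp} Lieb--Thirring inequality (Lemma \ref{lt}); optimizing $\ell$ against the coherent-state remainder $M\ell^{-2s}\|(-\Delta)^{s/2}G\|^2$ yields exactly the advertised error $M^{-s(d-2s)/d^2}$. No regularity of $\rho_\psi$ is ever needed because the smoothing argument is carried out on the fixed interaction kernel, not on the one-particle density.
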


\begin{proof}
	Let $\psi\in H^s(\R^{dM})$ be antisymmetric and $L^2$-normalized. We recall that according to Proposition \ref{electrostatic} we have
	\begin{align}\label{eq:summaryproof}
		\left\langle \psi, \sum_{m,k} \frac{Z}{|Y_m-R_k|^{2s}} \psi \right\rangle -  \sum_{k<l} \frac{Z^2}{|R_k-R_l|^{2s}}
		\leq D_{2s}[\rho_\psi] + \const Z \int_{\R^d} \frac{\rho_\psi(y)}{\delta_R(y)^{2s}}\,dy \,.
	\end{align}	
	The second term on the right side appears in the claimed error bound. To bound the first term, let $g\in H^s(\R^d)$ be $L^2$-normalized. Using the definition of $\tau_{d,s}$, Young's convolution inequality and Lemma \ref{coherent} we find
	\begin{align*}
		D_{2s}[\rho_\psi]
		& \leq \tau_{d,s}^{-1} \int_{\R^d} (\rho_\psi*|g|^2)^{1+\frac{2s}d}\,dy \left( \int_{\R^d} \rho_\psi * |g|^2 \,dy \right)^{1-\frac{2s}d} + \left( D_{2s}[\rho_\psi] - D_{2s}[\rho_\psi * |g|^2] \right) \\
		& \leq \tau_{d,s}^{-1} M^{1-\frac {2s}d} \int_{\R^d} (\rho_\psi*|g|^2)^{1+\frac{2s}d}\,dy
		+ \frac12 \| \rho_\psi \|_{1+\frac{2s}d}^2 \left\| |x|^{-2s} \!-\! |g|^2 * |x|^{-2s} * |g|^2 \right\|_{\frac{d+2s}{4s}} \\
		& \leq (\tau_{d,s} \, c_{d,s}^{\rm TF})^{-1}  M^{1-\frac {2s}d} \sum_{m=1}^M \int_{\R^{dM}} |(-\Delta_m)^\frac s2 \psi|^2\,dY + \mathcal R
	\end{align*}
	with
	$$
	\mathcal R:= (\tau_{d,s} \, c_{d,s}^{\rm TF})^{-1} M^{2-\frac {2s}d} \|(-\Delta)^\frac s2 g\|^2 +  \frac12 \| \rho_\psi \|_{1+\frac{2s}d}^2 \left\| |x|^{-2s} - |g|^2 * |x|^{-2s} * |g|^2 \right\|_{\frac{d+2s}{4s}} \,.
	$$
	We now assume that $g(x) = \ell^{-\frac d2} G( \ell^{-1} x)$ for an $L^2$-normalized function $G\in H^s(\R^d)$ and a parameter $\ell>0$ to be chosen. We have
	\begin{align*}
		\mathcal R & = \ell^{-2s} (\tau_{d,s} \, c_{d,s}^{\rm TF})^{-1} M^{2-\frac{2s}d} \|(-\Delta)^\frac s2 G\|^2 \\
		& \quad +  \ell^{\frac{2s(d-2s)}{d+2s}} \frac12 \| \rho_\psi \|_{1+\frac{2s}d}^2 \left\| |x|^{-2s} - |G|^2 * |x|^{-2s} * |G|^2 \right\|_{\frac{d+2s}{4s}} \,.
	\end{align*}
	We note that the function $|x|^{-2s} - |G|^2 * |x|^{-2s} * |G|^2$ behaves like $|x|^{-2s}$ as $|x|\to 0$. Moreover, assuming that $|G|$ is even and that $|x|^2 |G|^2$ is integrable, it is easy to see that $|x|^{-2s} - |G|^2 * |x|^{-2s} * |G|^2=O(|x|^{-2s-2})$ as $|x|\to\infty$. A tedious, but elementary analysis shows that $(2s+2)\frac{d+2s}{4s}>d$. (Indeed, this is equivalent to $2s^2 - s(d-2)+d>0$. This is always satisfied when $d=1,2$. For $d\geq 3$ the left side is decreasing with respect to $s$ for $s\leq\frac{d-2}4$, so its validity for our parameter values follows from its validity at $s=\min\{1,\frac{d-2}4\}$.) The conclusion of this discussion is that $|x|^{-2s} - |G|^2 * |x|^{-2s} * |G|^2\in L^{\frac{d+2}{4}}(\R^d)$. We consider $G$ as fixed and choose
	$$
	\ell = M^{\frac{(d-s)(d+2s)}{2sd^2}} \|\rho_\psi\|_{1+\frac{2s}d}^{-\frac{d+2s}{2sd}}
	$$
	in order to balance the two error terms, and obtain
	$$
	\mathcal R \lesssim M^{-\frac{s(d-2s)}{d^2}} M^{1-\frac{2s}d} \|\rho_\psi\|_{1+\frac{2s}d}^{1+\frac{2s}d} \,.
	$$
	Using the Lieb--Thirring inequality (Proposition \ref{lt}) we can further bound the right side and arrive at
	$$
	\mathcal R \lesssim M^{-\frac{s(d-2s)}{d^2}} M^{1-\frac{2s}d} \sum_{m=1}^M \int_{\R^{dM}} |(-\Delta_m)^\frac s2 \psi|^2\,dY \,.
	$$
	This implies the claimed bound
\end{proof}


\subsection{Domination of the nearest neighbor attraction}

For $X=(X_1,\ldots,X_N)$ and $n\in\{1,\ldots,N\}$, let
$$
\delta_n(X) := \min_{m\neq n} |X_n-X_m| \,.
$$

\begin{proposition}\label{nearestneighbor}
	Let $d\geq 1$ and $0<s<\frac d2$ with $s\leq 1$. Then for any antisymmetric $u\in\dot H^s(\R^{dN})$ we have
	$$
	\sum_{n=1}^N \int_{\R^{dN}} |(-\Delta_n)^\frac s2 u(X)|^2\,dX \gtrsim \sum_{n=1}^N \int_{\R^{dN}} \frac{|u(X)|^2}{\delta_n(X)^{2s}}\,dX
	$$
	with an implicit constant that only depends on $d$ and $s$.
\end{proposition}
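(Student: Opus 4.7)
The plan has two stages: a symmetry reduction to a one-particle problem, and a multipolar fractional Hardy inequality.

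\textbf{Stage 1 (Symmetry reduction).} Because $u$ is antisymmetric, $|u|^2$ is permutation-symmetric in $X_1,\ldots,X_N$. Consequently both $\int |(-\Delta_n)^{s/2}u|^2\,dX$ and $\int |u|^2/\delta_n(X)^{2s}\,dX$ are independent of $n$, so the claim is equivalent to
\[
\int_{\R^{dN}} |(-\Delta_1)^{s/2} u|^2\,dX \gtrsim \int_{\R^{dN}} \frac{|u|^2}{\delta_1(X)^{2s}}\,dX.
\]
Writing $X=(y,X')$ with $X'=(X_2,\ldots,X_N)$, setting $f(y):=u(y,X')$ and $\delta(y):=\min_{m\ge 2}|y-X_m|$, and using Fubini, this reduces to the one-particle inequality
\begin{equation}\label{plan:goal}
\int_{\R^d} |(-\Delta)^{s/2} f|^2\,dy \gtrsim \int_{\R^d} \frac{|f(y)|^2}{\delta(y)^{2s}}\,dy
\end{equation}
for a.e.\ $X'$, with a constant independent of $X'$. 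The antisymmetry of $u$ under the transposition $\sigma_{1m}$ forces $f$ to vanish at each of the $X_m$, $m\ge 2$, in an $L^2$-averaged sense.

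\textbf{Stage 2 (Multipolar Hardy via Voronoi + IMS).} Partition $\R^d$ into the Voronoi cells $V_m=\{y:|y-X_m|\le |y-X_{m'}|\ \forall m'\ge 2\}$; on $V_m$ one has $\delta(y)=|y-X_m|$. Naively applying the single-pole fractional Hardy inequality of Lemma~\ref{hardydeltar} on each $V_m$ and summing would lose a factor $N-1$, so instead one localizes. Choose a smooth partition of unity $\sum_m \chi_m^2=1$ with $\chi_m$ supported in a controlled enlargement of $V_m$ and slopes of order $1/\dist(\cdot,\partial V_m)$. An IMS-type identity for $(-\Delta)^{s/2}$ yields
\[
\|(-\Delta)^{s/2} f\|^2 \ge \sum_m \|(-\Delta)^{s/2}(\chi_m f)\|^2 - \int |f(y)|^2\,W(y)\,dy,
\]
where the localization-error weight $W$ can be made pointwise smaller than a prescribed small multiple of $\delta^{-2s}$ by a subadditivity argument in the spirit of Lemma~\ref{coherent}; this is the step where the restriction $s\le 1$ enters. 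On each piece, the vanishing of $\chi_m f$ at $X_m$ together with Lemma~\ref{hardydeltar} applied with pole $X_m$ gives
\[
\|(-\Delta)^{s/2}(\chi_m f)\|^2 \gtrsim \int \frac{|\chi_m f|^2}{|y-X_m|^{2s}}\,dy \gtrsim \int_{V_m} \frac{\chi_m^2\,|f|^2}{\delta(y)^{2s}}\,dy.
\]
Summing over $m$, using $\sum_m \chi_m^2=1$, and absorbing the small IMS error into a fraction of the left-hand side of \eqref{plan:goal} yields the desired one-particle bound, and hence the proposition.

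\textbf{Main obstacle.} The core technical difficulty is constructing the partition of unity $\{\chi_m\}$ and controlling the IMS-type localization error with a constant uniform in the (potentially very degenerate) geometry of the Voronoi tessellation, whose cells can be arbitrarily elongated. The nonlocal character of $(-\Delta)^{s/2}$ for $s<1$ makes the commutator/localization analysis delicate, and this is precisely where the hypothesis $s\le 1$ is invoked, through the subadditivity-after-symmetrization step already used in Lemma~\ref{coherent}.
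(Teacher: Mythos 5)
There is a genuine gap, and it is in Stage 1. The reduction to the one-particle inequality \eqref{plan:goal} ``for a.e.\ $X'$, with a constant independent of $X'$'' discards the antisymmetry entirely: the only remnant you keep is that $f(\cdot)=u(\cdot,X')$ vanishes at the points $X_2,\dots,X_N$, but for $2s<d$ finitely many points have zero $\dot H^s$-capacity, so this vanishing imposes no constraint in the energy space and cannot improve any Hardy-type constant. And without antisymmetry the inequality \eqref{plan:goal} with a configuration-independent constant is simply false: place $X_2,\dots,X_N$ on a grid of spacing $h\sim N^{-1/d}$ inside the unit cube and let $f$ be a fixed smooth bump there; then $\delta(y)\lesssim h$ on the cube, so $\int |f|^2\delta^{-2s}\,dy\gtrsim h^{-2s}\to\infty$ while $\|(-\Delta)^{s/2}f\|^2$ stays bounded. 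This is consistent with Lemma \ref{hardydeltar}, whose constant $K^{-2s/d}$ is essentially sharp for general $f$; your Stage 2 would, if correct, upgrade it to a $K$-independent constant for arbitrary $f$ (note that the single-pole Hardy inequality you invoke needs no vanishing at $X_m$ at all), which the counterexample rules out. Concretely, the step that must break is the IMS error estimate: the cutoffs $\chi_m$ adapted to the Voronoi geometry necessarily vary on a length scale comparable to $\delta$ itself near the cell boundaries, so the localization error for $(-\Delta)^s$ is of size $\delta^{-2s}$ with a constant that is not small and cannot be absorbed; no choice of partition of unity can repair this, since the target inequality is false.

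The actual mechanism behind Proposition \ref{nearestneighbor} is genuinely many-body: what matters is the antisymmetry of $u$ under exchanging particle $n$ with its \emph{nearest neighbor} $m$, a condition involving two variables jointly. When $|X_n-X_m|=\delta_n$ is small, this exchange antisymmetry forces $u$ to oscillate on scale $\delta_n$ in the pair of variables, and the corresponding local kinetic energy (seen, e.g., through the Gagliardo seminorm or a Fefferman--de la Llave type ball decomposition of the $\dot H^s$-seminorm) is then at least of order $\delta_n^{-2s}$ times the local probability mass. This is how the proofs the paper refers to proceed (\cite[Theorem 5]{LiYa2}, and \cite[Corollary 2]{FedlL} for $s<1$); the paper itself does not reprove the statement but cites these. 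A proof along your lines would have to retain the pair-exchange information rather than freezing $X'$ and reducing to a one-particle weighted Hardy inequality.
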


This bound appears as \cite[Theorem 5]{LiYa2} in the cases $d=3$ and $s\in\{\frac12,1\}$, but the proof readily generalizes to the stated parameter regime and is omitted. We also mention an alternative proof in \cite[Corollary 2]{FedlL}, which is based on a Fefferman--de la Llave type formula for the $\dot H^s(\R^d)$-seminorm and which generalizes to the regime $s<1$.

Probably Proposition \ref{nearestneighbor} remains valid for $1<s<\frac d2$, but this would require an argument and for the sake of brevity we do not consider this case. The IMS localization formula in \cite[Lemma 14]{LuNaPo} might be useful.


\subsection{Proof of the lower bound in Theorem \ref{main}}

We turn to the proof of \eqref{eq:lower}, for which we use the Levy-Leblond method \cite{LL}, similarly as in \cite{LiTh84,LiYa1}. Given $N\geq 3$ we choose an integer $M\in\{1,\ldots,N-2\}$ and a real number $Z>0$. We set $K:=N-M$ and consider partitions $\pi=(\pi_1,\pi_2)$ of $\{1,\ldots,N\}$ into two disjoint sets $\pi_1$ and $\pi_2$ with $M$ and $K$ elements, respectively. We have
\begin{align}\label{eq:levyleblond}
	\sum_{n<m} \frac{1}{|X_n-X_m|^{2s}} & = \frac{M(N-1)}{2ZMK - Z^2K(K-1)} \frac{N}{M} \binom{N}{M}^{-1} \notag \\
	& \quad \times \sum_\pi \left( \sum_{m\in\pi_1} \sum_{k\in\pi_2} \frac{Z}{|X_m-X_k|^{2s}} - \sum_{k<l\in\pi_2} \frac{Z^2}{|X_k-X_l|^{2s}} \right).
\end{align}

Let $u\in \dot H^s(\R^{dN})$ be antisymmetric. Our goal is to bound the Hardy quotient for $u$. By density we may assume that $u\in H^s(\R^{dN})$ and by homogeneity we may assume that $u$ is $L^2$-normalized. We integrate the left side of \eqref{eq:levyleblond} against $|u(X)|^2$. Correspondingly, on the right side we obtain a sum over partitions and we bound the integral for each fixed such partition $P$. We first carry out the integral over the variables in $\pi_1$. Denoting these variables as $(Y_1,\ldots,Y_M)$ and the variables in $\pi_2$ as $(R_1,\ldots,R_K)$ we infer from Corollary \ref{summary} that
\begin{align*}
	& \int_{\R^{dM}} \left( \sum_{m\in\pi_1} \sum_{k\in\pi_2} \frac{Z}{|X_m-X_k|^{2s}} - \sum_{k<l\in\pi_2} \frac{Z^2}{|X_k-X_l|^{2s}} \right) |u(X_1,\ldots,X_N)|^2 \,d\pi_1(X) \\
	& \leq \left( 1 + \const M^{-\frac{s(d-2s)}{d^2}} \right) (\tau_{d,s} \, c_{d,s}^{\rm TF})^{-1} M^{1-\frac{2s}d} \sum_{m\in\pi_1} \int_{\R^{dM}} |(\Delta_m)^\frac s2 u(X_1,\ldots,X_N)|^2\,d\pi_1(X) \\
	& \quad + \const Z \sum_{m\in\pi_1} \int_{\R^{dM}} \frac{|u(X_1,\ldots,X_N)|^2}{\delta_{\pi_2(X)}(X_m)^{2s}}\,d\pi_1(X) \,.
\end{align*}
Here $d\pi_1(X)$ denotes integration with respect to the variables $X_m$ with $m\in\pi_1$ and, for $m\in\pi_1$, we have 
$$
\delta_{\pi_2(X)}(X_m) = \min_{k\in\pi_2} |X_m-X_k| \geq \min_{k\neq m} |X_m-X_k| = \delta_m(X) \,.
$$
Inserting this into the above bound and carrying out the integration over the variables in $\pi_2$, we obtain
\begin{align*}
	& \int_{\R^{dN}} \left( \sum_{m\in\pi_1} \sum_{k\in\pi_2} \frac{Z}{|X_m-X_k|^{2s}} - \sum_{k<l\in\pi_2} \frac{Z^2}{|X_k-X_l|^{2s}} \right) |u(X)|^2 \,dX \\
	& \leq \left( 1 + \const M^{-\frac{s(d-2s)}{d^2}} \right) (\tau_{d,s} \, c_{d,s}^{\rm TF})^{-1} M^{1-\frac{2s}d} \sum_{m\in\pi_1} \int_{\R^{dN}} |(-\Delta_m)^\frac s2 u(X)|^2\,dX \\
	& \quad + \const Z \sum_{m\in\pi_1} \int_{\R^{dN}} \frac{|u(X_1,\ldots,X_N)|^2}{\delta_m(X)^{2s}}\,dX \,.
\end{align*}
According to \eqref{eq:levyleblond}, summing this bound over $\pi$ gives
\begin{align*}
	& \int_{\R^{dN}} \frac{|u(X)|^2}{|X_n-X_m|^{2s}}\,dX \\
	& \leq \frac{M(N-1)}{2ZMK - Z^2K(K-1)} \left( 1 + \const M^{-\frac{s(d-2s)}{d^2}} \right) \\
	& \quad \quad \times (\tau_{d,s} \, c_{d,s}^{\rm TF})^{-1} M^{1-\frac{2s}d} \sum_{n=1}^N \int_{\R^{dN}} |(-\Delta_n)^\frac s2 u|^2\,dX \\
	& \quad + \const \frac{M(N-1)}{2ZMK - Z^2K(K-1)} Z \sum_{n=1}^N \int_{\R^{dN}} \frac{|u(X)|^2}{\delta_n(X)^{2s}}\,dX \,.
\end{align*}
Using Proposition \ref{nearestneighbor}, the right side can be bounded by
$$
C \, (\tau_{d,s} \, c_{d,s}^{\rm TF})^{-1} N^{1-\frac{2s}d} \sum_{n=1}^N \int_{\R^{dN}} |(-\Delta_n)^\frac s2 u|^2\,dX \,.
$$
with
$$
C := \frac{M(N-1)}{2ZMK - Z^2K(K-1)} \left( 1 + \const M^{-\frac{s(d-2s)}{d^2}} + \const Z M^{-1+\frac{2s}d} \right) \left( \frac{M}{N} \right)^{1-\frac{2s}d}.
$$

Our goal is to choose the parameters $M$ and $Z$ (depending on $N$) in such a way that $C\to 1$ as $N\to\infty$. We choose $Z=M/K$ and obtain
$$
C = \frac{1+M^{-1}(K-1)}{1+K^{-1}} \left( 1 + \const M^{-\frac{s(d-2s)}{d^2}} + \const K^{-1} M^{\frac{2s}d} \right) \left( \frac{M}{N} \right)^{1-\frac{2s}d}.
$$
With the choice
$$
K := [ N ^{\frac sd+\frac12} ]
$$
we find
$$
C \leq 1 + \const N^{-\frac{s(d-2s)}{d^2}} \,.
$$
This completes the proof of \eqref{eq:lower}.
\qed

\begin{remark}
	Under the additional assumption $d>4s$ one can prove \eqref{eq:lower} (with a worse remainder bound) without using Proposition \ref{nearestneighbor}. Indeed, inserting the bound from Lemma \ref{hardydeltar} below into the bound in Corollary \ref{summary}, we can drop the last term there at the expense of replacing the factor in front of the first term by
	$$
	1 + \const M^{-\frac{s(d-2s)}{d^2}} + \const Z K^{\frac{2s}d} M^{-1+\frac {2s}d} \,.
	$$
	Choosing again $Z=M/K$, we can choose $K \sim N^\frac{d+2s}{2(d-s)}$ and arrive at \eqref{eq:lower} with the remainder $1- \const N^{-\frac{d-4s}{2(d-s)}}$.
\end{remark}

\begin{lemma}\label{hardydeltar}
	Let $0<s<\frac d2$. Then for all $v\in\dot H^s(\R^d)$, $K\in\N$ and $R\in\R^{3K}$
	$$
	\| (-\Delta)^\frac s2 v \|_2^2 \gtrsim K^{-\frac{2s}d} \int_{\R^d} \frac{|v(x)|^2}{\delta_R(x)^{2s}}\,dx
	$$
	with an implicit constant depending only on $d$ and $s$.
\end{lemma}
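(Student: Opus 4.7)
The plan is to establish that $\delta_R^{-2s}$ belongs to weak $L^{d/(2s)}(\R^d)$ with quasinorm of order $K^{2s/d}$, and then to combine this with Hölder's inequality in Lorentz spaces and the refined Sobolev embedding $\dot H^s(\R^d) \hookrightarrow L^{2d/(d-2s),2}(\R^d)$.

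First I would compute the distribution function of $\delta_R^{-2s}$ directly. Since
$$
\{x \in \R^d : \delta_R(x)^{-2s} > t\} = \{x : \delta_R(x) < t^{-1/(2s)}\} = \bigcup_{k=1}^K \ball(R_k, t^{-1/(2s)}),
$$
a crude union bound gives
$$
\bigl|\{x \in \R^d : \delta_R(x)^{-2s} > t\}\bigr| \leq K\,|\ball(0,1)|\,t^{-d/(2s)},
$$
and hence $\|\delta_R^{-2s}\|_{L^{d/(2s),\infty}(\R^d)} \lesssim K^{2s/d}$.

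Next I would apply Hölder's inequality in Lorentz spaces with the conjugate pair $(L^{d/(2s),\infty},\, L^{d/(d-2s),1})$ together with the elementary scaling identity $\||v|^2\|_{L^{d/(d-2s),1}} = \|v\|_{L^{2d/(d-2s),2}}^2$ to obtain
$$
\int_{\R^d} \frac{|v(x)|^2}{\delta_R(x)^{2s}}\,dx \lesssim K^{2s/d}\, \|v\|_{L^{2d/(d-2s),2}(\R^d)}^2.
$$
The remaining step is to invoke the sharpened Sobolev embedding $\|v\|_{L^{2d/(d-2s),2}} \lesssim \|(-\Delta)^{s/2}v\|_{L^2}$. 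This follows from O'Neil's Lorentz refinement of Young's convolution inequality applied to $(-\Delta)^{-s/2} = c_{d,s}\,|x|^{-(d-s)}*\cdot$: the Riesz kernel is in $L^{d/(d-s),\infty}$, and convolution of it with a function in $L^{2,2}=L^2$ yields a function in $L^{2d/(d-2s),2}$. Chaining the three bounds produces the claim.

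The decisive feature of the argument is the improvement from $K$ to $K^{2s/d}$ — a naive application of the one-particle fractional Hardy inequality to each $R_k$ and summation would give only a factor $K$, because it ignores the fact that $\delta_R^{-2s}(x)$ equals $|x-R_k|^{-2s}$ for a single nearest $k$ rather than the full sum. The weak-$L^{d/(2s)}$ bound on $\delta_R^{-2s}$, together with the sharp Sobolev embedding, captures precisely this saving. The main technical obstacle is the Lorentz-space Sobolev inequality in the last step, which is slightly less elementary than the usual Sobolev embedding $\dot H^s \hookrightarrow L^{2d/(d-2s)}$; if one prefers to avoid Lorentz spaces entirely, an alternative is a Littlewood--Paley decomposition $v = \sum_j P_j v$ combined with Bernstein's inequality $\|P_j v\|_{2d/(d-2s)} \lesssim 2^{js}\|P_j v\|_2$ and a layer-cake analysis of the distribution function of $\delta_R$, yielding the same estimate.
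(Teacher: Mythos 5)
Your proof is correct and follows essentially the same route as the paper: the weak $L^{d/(2s)}$ bound $\|\delta_R^{-2s}\|_{L^{d/(2s),\infty}}\lesssim K^{2s/d}$ via a union bound, Lorentz-space H\"older with the conjugate pair $(L^{d/(2s),\infty},L^{d/(d-2s),1})$, the identity $\||v|^2\|_{L^{d/(d-2s),1}}=\|v\|_{L^{2d/(d-2s),2}}^2$, and Peetre's refined Sobolev embedding $\dot H^s\hookrightarrow L^{2d/(d-2s),2}$. The only difference is that you additionally sketch a proof of the refined embedding via O'Neil's inequality, whereas the paper simply cites it.
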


The following proof has some similarities with \cite[Lemma B.1]{LiYa1}.

\begin{proof}
	We use the improved Sobolev embedding in Lorentz spaces \cite{Pe} (see also \cite[Theorem 17.49]{Le}),
	$$
	\| (-\Delta)^\frac s2 v \|_2^2 \gtrsim \|v\|_{L^{\frac{2d}{d-2s},2}(\R^d)} \,,
	$$
	together with H\"older's inequality in Lorentz spaces \cite[Exercise 15.22]{Le},
	$$
	\int_{\R^d} \frac{|v(x)|^2}{\delta_R(x)^{2s}}\,dx \lesssim \|\delta_R^{-2s} \|_{L^{\frac{d}{2s},\infty}(\R^d)}  \| |v|^2 \|_{L^{\frac{d}{d-2s},1}(\R^d)} = \|\delta_R^{-2s} \|_{L^{\frac{d}{2s},\infty}(\R^d)}  \| v \|_{L^{\frac{2d}{d-2s},2}(\R^d)}^2 \,.
	$$
	It remains to bound the weak $L^{\frac d{2s}}$ norm of $\delta_R^{-2s}$. We have 
	\begin{align*}
		|\{ \delta_R < \lambda \}| \leq \sum_{k=1}^K |\{ |\cdot - R_k|<\lambda\}| = \const K \lambda^d \,,
	\end{align*}
	so
	$$
	\|\delta_R^{-2s} \|_{L^{\frac{d}{2s},\infty}(\R^d)} = \sup_{\mu>0} \mu |\{ \delta_R^{-2s} >\mu \}|^\frac{2s}{d} \lesssim K^\frac{2s}d \,.
	$$
	This gives the claimed bound.
\end{proof}


\subsection{The case without antisymmetry}\label{sec:bosons}

In this subsection we explain how the proof of \eqref{eq:upper} can be modified to give a lower bound on the optimal constant $\beta_N^{(d,s)}$ in \eqref{eq:hardymanys}. We denote
$$
\omega_{d,s} := \inf_{0\neq\sqrt{\rho}\in H^s(\R^d)} \frac{\|(-\Delta)^\frac s2 \sqrt\rho\|_2^2 \, \|\rho\|_1}{D_{2s}[\rho]} \,.
$$
It is not difficult to show that $\omega_{d,s}>0$ when $0<s<\frac d2$ and that there is an optimizer $\rho_*$; see, e.g., \cite[Theorem 4]{LiYa1} in the case $s=\frac12$, $d=3$. Then one can show that
\begin{equation}
	\label{eq:upperbosons}
	\beta_N^{(d,s)} \leq \frac{1}{N-1}\, \omega_{d,s}
	\qquad\text{for all}\ N\geq 2 \,.
\end{equation}
by taking $u(X)=\prod_{n=1}^N \sqrt{\rho_*(X_n)}$. For $s=1$, $d\geq 3$ this argument appears in \cite[Theorem 2.3]{HO2LaTi}. We now state the lower bound corresponding to \eqref{eq:upperbosons}.

\begin{proposition}
	Let $d\geq 1$ and $0<s<\frac d2$ with $s\leq 1$. Then
	$$
	N \beta_N^{(d,s)} \geq \omega_{d,s} \left( 1 - \const N^{-1+\frac{2s}d} \right).
	$$
\end{proposition}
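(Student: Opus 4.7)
The plan is to adapt the lower bound proof of Theorem \ref{main} with two substitutions needed in the absence of antisymmetry: the Lieb--Thirring/Thomas--Fermi treatment of the kinetic energy is replaced by the variational characterization of $\omega_{d,s}$ combined with the Hoffmann--Ostenhof inequality, and Proposition \ref{nearestneighbor} is replaced by Lemma \ref{hardydeltar}.

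First, I would establish a bosonic analog of Corollary \ref{summary}. Starting from Proposition \ref{electrostatic}, the inequality $D_{2s}[\rho_\psi] \leq \omega_{d,s}^{-1}\|\rho_\psi\|_1\|(-\Delta)^{s/2}\sqrt{\rho_\psi}\|_2^2$ is immediate from the definition of $\omega_{d,s}$. The Hoffmann--Ostenhof inequality
\[
\sum_{m=1}^M \int_{\R^{dM}} |(-\Delta_m)^{s/2}\psi|^2\, dY \geq \|(-\Delta)^{s/2}\sqrt{\rho_\psi}\|_2^2
\]
then takes the place of the Lieb--Thirring/coherent-state argument. For $s<1$ this follows from the subordination $(-\Delta)^s = c_s\int_0^\infty t^{-1-s}(1-e^{t\Delta})\,dt$ combined with the pointwise bound $|\gamma(x,y)| \leq \sqrt{\rho_\gamma(x)\rho_\gamma(y)}$ on the one-body reduced density matrix of $\psi$; for $s=1$ it is classical. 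Combining these ingredients yields, for any $L^2$-normalized $\psi \in H^s(\R^{dM})$ and any $R\in\R^{dK}$, $Z>0$:
\[
\left\langle \psi, \sum_{m,k} \frac{Z}{|Y_m-R_k|^{2s}} \psi \right\rangle - \sum_{k<l}\frac{Z^2}{|R_k-R_l|^{2s}} \leq \omega_{d,s}^{-1} M \sum_{m=1}^M \int |(-\Delta_m)^{s/2}\psi|^2\, dY + \const Z \int \frac{\rho_\psi(y)}{\delta_R(y)^{2s}}\,dy.
\]

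I would then carry out the L\'evy--Leblond argument exactly as in the proof of Theorem \ref{main}, with $M \in \{1,\ldots,N-2\}$, $K=N-M$, and $Z>0$ to be chosen. For each partition, I would condition on the nuclei coordinates $R=X_{\pi_2}$, apply the bosonic bound above to the conditional wavefunction $\tilde\psi(Y|R) = \psi(Y,R)/\sqrt{f(R)}$ (where $f(R) = \int |\psi(Y,R)|^2\,dY$), and integrate against $f(R)\,dR$. Summing over the $\binom{N}{M}$ partitions (each $n$ appearing in $\binom{N-1}{M-1}$ of them as a $\pi_1$-member) and choosing $Z = M/(K-1)$ to maximize the coefficient of the Hardy sum on the left produces
\[
\sum_{n<m} \int \frac{|u|^2}{|X_n-X_m|^{2s}}\,dX \leq \omega_{d,s}^{-1}\,\frac{(N-1)(K-1)}{K}\sum_{n=1}^N \int |(-\Delta_n)^{s/2} u|^2\,dX + \const \,\frac{N-1}{K}\sum_{n=1}^N \int \frac{|u|^2}{\delta_n(X)^{2s}}\,dX.
\]

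The main obstacle is handling the last term: Proposition \ref{nearestneighbor} is unavailable without antisymmetry. Instead, applying Lemma \ref{hardydeltar} in the single variable $X_n$ with the remaining $N-1$ coordinates playing the role of the nuclei $R$, and then summing over $n$, gives
\[
\sum_{n=1}^N \int \frac{|u|^2}{\delta_n(X)^{2s}}\,dX \lesssim (N-1)^{2s/d} \sum_{n=1}^N \int |(-\Delta_n)^{s/2} u|^2\,dX.
\]
Thus the relative size of the error term is $\lesssim (N-1)^{2s/d}/K$. Choosing $K = \lceil N/2\rceil$, so that $M$ and $K$ are both of order $N$, balances it at order $N^{-1+2s/d}$. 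Inverting the resulting inequality and using $N/(N-1) \geq 1$ then yields $N\beta_N^{(d,s)} \geq \omega_{d,s}(1 - \const N^{-1+2s/d})$, as claimed.
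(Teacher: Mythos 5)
Your proposal is correct and follows the paper's strategy closely---same electrostatic inequality (Proposition \ref{electrostatic}), same variational characterization of $\omega_{d,s}$, same Hoffmann--Ostenhof inequality, same L\'evy--Leblond averaging---but differs in where Lemma \ref{hardydeltar} is invoked. You carry the error term $\const Z\int \rho_\psi(y)\delta_R(y)^{-2s}\,dy$ through the L\'evy--Leblond averaging, turning it into $\sum_n \int |u|^2\delta_n(X)^{-2s}\,dX$, and then apply Lemma \ref{hardydeltar} at the $N$-particle level one coordinate at a time with the other $N-1$ particles playing the role of nuclei, which costs a factor $(N-1)^{2s/d}$. The paper instead applies Lemma \ref{hardydeltar} \emph{before} averaging, directly to $v=\sqrt{\rho_\psi}\in\dot H^s(\R^d)$ with the $K$ actual nuclei, obtaining a $K^{2s/d}$ factor, and then reuses the Hoffmann--Ostenhof inequality to convert $\|(-\Delta)^{s/2}\sqrt{\rho_\psi}\|_2^2$ into the $M$-particle kinetic energy. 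This second route makes the simplest parameter choice $M=1$, $K=N-1$, $Z=M/K$ immediately available and gives the same $O(N^{-1+2s/d})$ relative error; your choice $K\sim N/2$ with $Z=M/(K-1)$ also works (and in fact $M=1$, $K=N-1$ would also close your version). One small point: your derivation of the Hoffmann--Ostenhof inequality for $s<1$ via subordination and the Cauchy--Schwarz bound $|\gamma_\psi(x,y)|\le\sqrt{\rho_\psi(x)\rho_\psi(y)}$ (together with the positivity of $\Tr e^{t\Delta}\gamma_\psi$) is a valid sketch; the paper simply cites Conlon \cite{Con} for this.
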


\begin{proof}
	We proceed from inequality \eqref{eq:summaryproof}, which did not use the antisymmetry of $\psi$. Using the definition of $\omega_{d,s}$ and Lemma \ref{hardydeltar} we can bound the right side of \eqref{eq:summaryproof} by
	\begin{align*}
		& D_{2s}[\rho_\psi] + \const Z \int_{\R^d} \frac{\rho_\psi(y)}{\delta_R(y)^{2s}}\,dy \\
		& \leq \omega_{d,s}^{-1} \|(-\Delta)^\frac s2 \sqrt{\rho_\psi} \|_2^2 \, \|\rho_\psi \|_1 
		+ \const Z K^\frac{2s}{d} \|(-\Delta)^\frac s2 \sqrt{\rho_\psi} \|_2^2 \\
		& \leq \left( 1 + \const Z M^{-1} K^\frac{2s}d \right) \omega_{d,s}^{-1} \, M \sum_{m=1}^M \int_{\R^{dM}} |(-\Delta_m)^\frac s2 \psi|^2\,dY \,.
	\end{align*}
	The second inequality here is the Hoffmann-Ostenhof inequality \cite{HO2} for $s=1$ and its generalization to $s<1$ by Conlon \cite{Con}. Following the L\'evy-Leblond method we deduce from this bound that
	\begin{align*}
		\int_{\R^{dN}} \frac{|u(X)|^2}{|X_n-X_m|^{2s}}\,dX \leq C \omega_{d,s}^{-1} N \sum_{n=1}^N \int_{\R^{dN}} |(-\Delta_n)^\frac s2 \psi|^2\,dX
	\end{align*}
	with
	$$
	C:= \frac{M(N-1)}{2ZMK-Z^2K(K-1)} \left( 1 + \const Z M^{-1} K^\frac{2s}d \right) \frac{M}{N} \,.
	$$
	We choose again $Z=M/K$ and obtain
	$$
	C:= \frac{1 + M^{-1}(K-1)}{1 + K^{-1}} \left( 1 + \const K^{-1+\frac{2s}d} \right) \frac{M}{N} \,.
	$$
	Choosing $M=1$ we arrive at the claimed bound.
\end{proof}


\section{Upper bound}

Our goal in this section is to prove the upper bound in Theorem \ref{main}. That is, we shall show
$$
\limsup_{N\to\infty} N^{1-\frac{2s}d} \kappa_N^{(d,s)} \leq \tau_{d,s} \, c_{d,s}^{\rm TF} \,.
$$
More precisely, we will prove the following quantitative version of it,
\begin{equation}
	\label{eq:upper}
	N^{1-\frac{2s}d} \kappa_N^{(d,s)} \leq \tau_{d,s} \, c_{d,s}^{\rm TF} \left( 1 + \const N^{-\frac{s(d-2s)}{d^2}} \right).
\end{equation}
For the proof we follow rather closely the method in \cite[Section 3]{LiTh84}. One new ingredient is an exchange inequality, which appears in Proposition \ref{indirect}.


\subsection{A bound on the indirect part of the Riesz energy}

Here we return to the setting of Subsection \ref{sec:electrostatic} and consider probability measures $\mu$ on $\R^{dN}$ and their marginals $\rho_\mu$.

\begin{proposition}\label{indirect}
	Let $d\geq 1$ and $0<\lambda<d$. Then for any $N\in\N$ and for any nonnegative Borel probability measure $\mu$ on $\R^{dN}$ with $\rho_\mu\in L^{1+\frac\lambda d}(\R^d)$,
	$$
	\sum_{1\leq n<m\leq N} \int_{\R^{dN}} \frac{d\mu(X)}{|X_n-X_m|^\lambda} - D_\lambda[\rho_\mu] \gtrsim - \int_{\R^d} \rho_\mu(x)^{1+\frac \lambda d}\,dx
	$$
	with an implicit constant depending only on $d$ and $\lambda$.
\end{proposition}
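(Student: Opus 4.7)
I plan to follow the strategy of Proposition~\ref{electrostatic}, starting from the Fefferman--de la Llave formula~\eqref{eq:fdll}. Writing $K_B(X):=\sum_{n=1}^N\1_B(X_n)$ and $n_B:=\rho_\mu(B)$, and using $\sum_{n<m}\1_B(X_n)\1_B(X_m)=\binom{K_B(X)}{2}$ (valid since $\1_B^2=\1_B$), the decomposition yields
$$
\sum_{n<m}\int_{\R^{dN}}\frac{d\mu(X)}{|X_n-X_m|^\lambda}-D_\lambda[\rho_\mu]=\const\int_0^\infty\frac{dr}{r^{d+\lambda+1}}\int_{\R^d}\!da\left(\int\binom{K_{B_r(a)}}{2}d\mu-\tfrac12 n_{B_r(a)}^2\right).
$$
For each ball $B$, two elementary lower bounds on the integrand combine: from $\binom{K_B}{2}\geq 0$ pointwise one gets $\int\binom{K_B}{2}d\mu-\tfrac12 n_B^2\geq-\tfrac12 n_B^2$, while Jensen's inequality $\int K_B^2\,d\mu\geq n_B^2$ together with the identity $\int\binom{K_B}{2}d\mu=\tfrac12\int K_B^2 d\mu-\tfrac12 n_B$ gives $\int\binom{K_B}{2}d\mu-\tfrac12 n_B^2\geq-\tfrac12 n_B$. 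The better of the two is $-\tfrac12 n_B\min(1,n_B)$, reducing the task to showing
$$
J:=\int_0^\infty\frac{dr}{r^{d+\lambda+1}}\int_{\R^d}\!da\,n_{B_r(a)}\min(1,n_{B_r(a)})\,\lesssim\,\int_{\R^d}\rho_\mu^{1+\lambda/d}.
$$

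The main obstacle is that this bound is only marginally correct: the naive interpolation $n\min(1,n)\leq n^{1+\lambda/d}$ combined with Jensen's inequality $n_{B_r(a)}^{1+\lambda/d}\leq|B_r|^{\lambda/d}\int_{B_r(a)}\rho_\mu^{1+\lambda/d}$ produces an $r$-integrand of order $r^{-1}$, logarithmically divergent at both endpoints, so a genuine cutoff is essential. I will introduce an adaptive cutoff $r_0(a):=\inf\{r>0\,:\,n_{B_r(a)}\geq 1\}$ (which is $1$-Lipschitz in $a$ by monotonicity of $n_{B_r(\cdot)}$ in $r$) and split $J=J_-+J_+$, where $J_-$ collects $r<r_0(a)$ (so $n_B\leq 1$ and $n\min(1,n)=n^2$) and $J_+$ collects $r\geq r_0(a)$ (so $n_B\geq 1$ and $n\min(1,n)=n$). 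On the small-$r$ piece I use Jensen to bound $n_{B_r(a)}^2\leq n_{B_r(a)}^{1+\lambda/d}\leq|B_r|^{\lambda/d}\int_{B_r(a)}\rho_\mu^{1+\lambda/d}$; on the large-$r$ piece I rewrite $n_{B_r(a)}=\int\1_{B_r(a)}(y)\,d\rho_\mu(y)$ and integrate first in $r$. After Fubini, each piece reduces to an integral of the form $\int\rho_\mu^p(y)\,W(y)\,dy$ for a geometric weight $W$ controlled by the local behavior of $r_0$.

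The closing step uses the Hardy--Littlewood maximal function. From the defining relation $n_{B_{r_0(y)}(y)}=1$ one obtains $r_0(y)^{-d}\lesssim M\rho_\mu(y)$, and the pointwise bound $M\rho_\mu\leq (M(\rho_\mu^{1+\lambda/d}))^{d/(d+\lambda)}$ (Jensen applied inside the defining averages) yields $r_0(y)^{-\lambda}\lesssim M(\rho_\mu^{1+\lambda/d})(y)^{\lambda/(d+\lambda)}$. Inserting this into the geometric weights $W$ and applying Hölder's inequality with conjugate exponents $p=1+\lambda/d$ and $p'=(d+\lambda)/\lambda$, together with the $L^{1+\lambda/d}$-boundedness of $M$ (valid since $1+\lambda/d>1$), yields the desired bound $J\lesssim\int\rho_\mu^{1+\lambda/d}$ with a constant depending only on $d$ and $\lambda$, completing the proof.
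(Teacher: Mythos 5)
Your opening steps match the paper's proof exactly: you use the Fefferman--de la Llave formula, derive the two elementary lower bounds on $\mathcal I_B - \tfrac12 n_B^2$ (the trivial one and the one obtained from $\rho=\rho_\mu$), and combine them into $-\tfrac12 \min\{n_B, n_B^2\}$. Where you diverge is in closing the argument, and this is where a genuine gap appears.

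You correctly observe that the naive interpolation $n_B\min(1,n_B)\le n_B^{1+\lambda/d}$ followed by Jensen's inequality yields an $r$-integrand of order $r^{-1}$, and you propose to repair this with the adaptive cutoff $r_0(a)$. The $J_+$ piece indeed closes: if $|a-y|<r$ and $r\ge r_0(a)$ then the Lipschitz property gives $r_0(y)<2r$, hence the $a$-integral is $\lesssim r^d\,\1(r>r_0(y)/2)$, and the remaining $r$-integral converges to $\const r_0(y)^{-\lambda}$. But the $J_-$ piece does not close as written. After Jensen and Fubini you are left with $\int\rho_\mu^{1+\lambda/d}(y)\,W(y)\,dy$ where
$$
W(y)=\int_0^\infty\frac{dr}{r^{d+1}}\,\bigl|\{a:\ |a-y|<r,\ r<r_0(a)\}\bigr| \,.
$$
For small $r$ the set $\{a:|a-y|<r,\ r<r_0(a)\}$ contains the full ball $\{|a-y|<\min(r,r_0(y)-r)\}$ (by the Lipschitz property, $|a-y|<r_0(y)-r$ forces $r_0(a)>r$), so its measure is comparable to $r^d$ for all $r<r_0(y)/2$, and $W(y)=+\infty$. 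The cutoff $r_0(a)$ constrains $a$ to an annulus only for $r>r_0(y)$, which does not help at $r\to 0$; Jensen at the ball $B_r(a)$ simply loses too much at small scales. This is a real gap, not a presentational one.

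The paper avoids it with a shorter route: no adaptive cutoff, no Jensen, no Fubini. One bounds $n_{B_r(a)}\le|B_r(a)|\,\rho_\mu^*(a)=\const r^d\rho_\mu^*(a)$ pointwise by the Hardy--Littlewood maximal function \emph{at the center $a$}, inserts this into the $\min$, and then the $r$-integral
$$
\int_0^\infty\frac{dr}{r^{d+\lambda+1}}\,\min\bigl\{r^d\rho_\mu^*(a),\,r^{2d}\rho_\mu^*(a)^2\bigr\}
=\const\,\rho_\mu^*(a)^{1+\lambda/d}
$$
converges automatically (the $\min$ supplies the crossover at $r\sim\rho_\mu^*(a)^{-1/d}$; convergence at $0$ uses $\lambda<d$ and at $\infty$ uses $\lambda>0$). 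Integrating in $a$ and invoking the $L^{1+\lambda/d}$-boundedness of the maximal operator finishes the proof. If you replace your Jensen+Fubini treatment of $J_-$ by the pointwise bound $n_{B_r(a)}^2\le\const r^{2d}\rho_\mu^*(a)^2$ and the pointwise relation $r_0(a)^{-d}\lesssim\rho_\mu^*(a)$, your split also closes -- but at that point you have essentially rediscovered the paper's one-line calculation and the cutoff $r_0$ becomes superfluous. Your final Hölder step is also more involved than needed: the intermediate bound $M\rho_\mu\le(M(\rho_\mu^{1+\lambda/d}))^{d/(d+\lambda)}$ is unnecessary, since $r_0(y)^{-\lambda}\lesssim(M\rho_\mu(y))^{\lambda/d}$ together with Hölder and $L^{1+\lambda/d}$-boundedness of $M$ already suffices for $J_+$.
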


This bound for $\lambda=1$ and $d=3$ is due to \cite{Li0} with an improved constant in \cite{LiOx}. Here we adapt the proof strategy from \cite{LiSoYn}, which does not use (sub/super)harmonicity properties of the interaction potential.

\begin{proof}
	The Fefferman--de la Llave formula \eqref{eq:fdll} implies that
	\begin{align*}
		& \sum_{1\leq n<m\leq N} \int_{\R^{dN}} \frac{d\mu(X)}{|X_n-X_m|^\lambda} - D_\lambda[\rho_\mu] \\
		& = \const \int_0^\infty \frac{dr}{r^{d+\lambda+1}}\, \int_{\R^d} da \left( \int_{\R^{dN}} \sum_{1\leq n<m\leq N} \1_{B_r(a)}(X_n) \, \1_{B_r(a)}(X_m) \,d\mu(X) \right. \\ 
		& \qquad\qquad\qquad\qquad\qquad\qquad \left. - \frac12 \iint_{\R^d\times\R^d} \rho_\psi(y) \1_{B_r(a)}(y) \, \1_{B_r(a)}(y') \rho_\psi(y')  \right) \\
		& = \const \int_0^\infty \frac{dr}{r^{d+\lambda+1}}\, \int_{\R^d} da \left( \mathcal I_{B_r(a)} - \frac12 n_{B_r(a)}^2 \right),
	\end{align*}
	where, for any ball $B$, $n_B$ is defined as in the proof of Proposition \ref{electrostatic} and where
	$$
	\mathcal I_B := \int_{\R^{dN}} \sum_{1\leq n<m\leq N} \1_{B}(X_n) \, \1_{B}(X_m) \,d\mu(X) \,.
	$$
	
	We will derive two different lower bounds on the integrand $\mathcal I_B - \frac12 n_B^2$. The first one is the trivial bound
	\begin{equation}
		\label{eq:lobound1}
		\mathcal I_B - \frac12 n_B^2 \geq - \frac12 n_B^2 \,.
	\end{equation}
	To derive the second lower bound, we estimate, with an arbitrary $0\leq\rho\in L^1_\loc(\R^d)$ and writing $\rho_B:= \int_B \rho(x)\,dx$,
	\begin{align*}
		\sum_{n<m} \1_{B}(X_n) \, \1_{B}(X_m) & = \rho_B \sum_n \1_B(X_n) - \frac12 \rho_B^2 - \frac12 \sum_n \1_B(X_n) \\
		& \quad + \frac12 \left( \sum_n \1_B(X_n) - \rho_B \right)^2 \\
		& \geq \rho_B \sum_n \1_B(X_n) - \frac12 \rho_B^2 - \frac12 \sum_n \1_B(X_n) \,.
	\end{align*}
	Thus,
	$$
	\mathcal I_B \geq \rho_B n_B -  \frac12 \rho_B^2 - \frac12 n_B \,,
	$$
	with $n_B$ as in the proof of Proposition \ref{electrostatic}. Choosing $\rho=\rho_\mu$ we have $\rho_B=n_B$ and therefore
	\begin{equation}
		\label{eq:lobound2}
		\mathcal I_B - \frac12 n_B^2 \geq - \frac12 n_B \,.
	\end{equation}
	
	Combining \eqref{eq:lobound1} and \eqref{eq:lobound2} we find
	$$
	\mathcal I_B - \frac12 n_B^2 \geq - \frac12 \min\{ n_B, n_B^2 \} \,.
	$$
	We now bound $n_B$ from above using the maximal function $\rho_\mu^*$ of $\rho_\mu$. By its definition (see, e.g., \cite[Definition 2.1.1]{Gr}) we obtain
	$$
	n_{B_r(a)} \leq |B_r(a)| \, \rho_\mu^*(a) = \const r^d\, \rho_\mu^*(a) \,.
	$$
	Consequently,
	\begin{align*}
		\int_0^\infty \frac{dr}{r^{d+\lambda+1}}\, \int_{\R^d} da \left( \mathcal I_{B_r(a)} - \frac12 n_{B_r(a)}^2 \right) 
		& \lesssim \int_0^\infty \frac{dr}{r^{d+\lambda+1}}\, \int_{\R^d} da\, \min\{ r^d \rho_\mu^*(a), r^{2d} \rho_\mu^*(a)^2\} \\
		& = \const \int_{\R^d} da\,  \rho_\mu^*(a)^{1+\frac\lambda d} \,.
	\end{align*}
	The assertion now follows from the boundedness of the maximal function on $L^{1+\frac{\lambda}{d}}(\R^d)$ \cite[Theorem 2.1.6]{Gr}.
\end{proof}


\subsection{Relaxation to density matrices}

We use the result from the previous subsection to make the next step towards \eqref{eq:upper}, namely by proving an upper bound in terms of density matrices.

We recall that a nonnegative trace class operator $\gamma$ on $L^2(\R^d)$ has a well defined density $\rho_\gamma\in L^1(\R^d)$. Indeed, if we decompose $\gamma= \sum_i \lambda_i |\psi_i\rangle\langle\psi_i|$ with orthonormal $\psi_i$, then $\rho_\gamma = \sum_i \lambda_i |\psi_i|^2$. (In the case of a non-simple eigenvalue $\lambda_i$ one can convince oneself easily that this is independent of the choice of the eigenfunction $\psi_i$.)

We claim that for any operator $\gamma$ on $L^2(\R^d)$ satisfying
\begin{equation}
	\label{eq:gammaproperties}
	0\leq\gamma\leq 1
	\qquad\text{and}\qquad
	N:= \Tr\gamma \in\N
\end{equation}
we have
\begin{equation}
	\label{eq:constdensitymatrix}
	\Tr(-\Delta)^s \gamma \geq \kappa_N^{(d,s)} \left( D_{2s}[\rho_\gamma] - \const \int_{\R^d} \rho_\gamma(x)^{1+\frac{2s}d}\,dx \right).
\end{equation}
Here, as usual, we write $\Tr(-\Delta)^s \gamma$ instead of $\Tr(-\Delta)^\frac s2 \gamma (-\Delta)^\frac s2$. Of course the bound is only meaningful if the latter quantity is finite.

Given Proposition \ref{indirect} the proof of this assertion is relatively standard (see, e.g., \cite[Section 3]{LiTh84}), but we include some details for the sake of completeness. First, there exists a nonnegative operator $\Gamma$ on the antisymmetric subspace of $L^2(\R^{dN})$ satisfying
$$
\Tr_{N-1} \Gamma = \gamma  \,,
$$
where $\Tr_{N-1}$ denotes the partial trace with respect to $N-1$ variables. This is due to \cite{Co}; see also \cite[Theorem 3.2]{LiSe}. It follows that
$$
\Tr\Gamma = N^{-1} \Tr\gamma = 1
\qquad\text{and}\qquad
\sum_{n=1}^N \Tr(-\Delta_n)^s\Gamma = \Tr(-\Delta)^s\gamma \,.
$$
Expanding
$$
\Gamma = \sum_i p_i |u_i\rangle\langle u_i|
$$
with orthonormal antisymmetric functions $u_i\in L^2(\R^{dN})$ and nonnegative numbers $p_i$ we obtain
$$
\sum_i p_i = 1
\qquad\text{and}\qquad
\sum_i p_i \sum_{n=1}^N \int_{\R^{dN}} |(-\Delta_n)^{\frac s2}u_i|^2\,dX = \Tr(-\Delta)^s\gamma \,.
$$
Therefore, the definition of $\kappa_N^{(d,s)}$, applied to $u_i$, yields the inequality
$$
\Tr(-\Delta)^s\gamma \geq \kappa_N^{(d,s)} \sum_i p_i \sum_{n<m} \int_{\R^{dN}} \frac{|u_i(X)|^2}{|X_n-X_m|^{2s}}\,dX \,.
$$
We now apply Proposition \ref{indirect} to the measure
$$
d\mu(X) = \sum_i p_i |u_i(X)|^2 \,dX \,,
$$
which, by the above properties, is indeed a probability measure. Moreover, using the partial trace relation between $\Gamma$ and $\gamma$, we find $\rho_\mu=\rho_\gamma$. Thus, the claimed inequality \eqref{eq:constdensitymatrix} follows from Proposition \ref{indirect}.


\subsection{Construction of $\gamma$ using coherent states}

Let $0\leq\rho\in L^1\cap L^{1+\frac{2s}d}(\R^d)$ with $\int_{\R^d} \rho\,dx=N$ and let $g\in H^s(\R^d)$ be $L^2$-normalized. We consider the operator
$$
\gamma(x,x') = \iint_{\R^d\times\R^d} g(y-x) \, e^{i\eta\cdot(x-x')} \, \1(|\eta|^{2s}<c \rho(y)^\frac{2s}{d}) \, \overline{g(y-x')} \, \frac{dy\,d\eta}{(2\pi)^d}
$$
with $c = (2\pi)^{2s} \omega_d^{-\frac{2s}{d}}$, where $\omega_d$ is the volume of the unit ball in $\R^d$.

It is easy to see that this operator satisfies \eqref{eq:gammaproperties}. Indeed, the bound $\gamma\geq 0$ follows immediately by estimating $\1(|\xi|^{2s}<c \rho(x)^\frac{2s}{d})\geq 0$ and the bound $\gamma\leq 1$ follow by estimating $\1(|\xi|^{2s}<c \rho(x)^\frac{2s}{d})\leq 1$ and using Plancherel and the normalization of $g$. To prove $\Tr\gamma=N$ we integrate the kernel on the diagonal, using the choice of $c$ and, again, the normalization of $g$. In this connection we also note that the density of $\gamma$ is
$$
\rho_\gamma(x) = \int_{\R^d} \rho(x) |g(y-x)|^2 \,dy = \rho*|g|^2(x) \,.
$$
Assuming that $|\widehat g|$ is even, we claim that
$$
\Tr(-\Delta)^s\gamma \leq c_{d,s}^{\rm TF} \int_{\R^d} (\rho*|g|^2)^{1+\frac{2s}d}\,dx + N \|(-\Delta)^\frac s2 g\|_2^2 \,.
$$
This is shown in the special case $d=3$, $s=\frac12$ in \cite[Section 3]{LiTh84} (see also \cite[Theorem 12.10]{LiLo}). The proof generalizes to the general case, the underlying estimates being the same as in the proof of Lemma \ref{coherent}.

If we insert these facts into \eqref{eq:constdensitymatrix}, we obtain
\begin{align*}
	& c_{d,s}^{\rm TF} \int_{\R^d} (\rho*|g|^2)^{1+\frac{2s}d}\,dx + N \|(-\Delta)^\frac s2 g\|_2^2 \\
	& \geq \kappa_N^{(d,s)} \left( D_{2s}[\rho*|g|^2] - \const \int_{\R^d} (\rho*|g|^2)^{1+\frac{2s}d}\,dx \right).
\end{align*}
By the normalization of $g$ and Minkowski's inequality, we have
$$
\int_{\R^d} (\rho*|g|^2)^{1+\frac{2s}d}\,dx \leq \int_{\R^d} \rho^{1+\frac{2s}d}\,dx \,.
$$
Moreover, as in the proof of \eqref{eq:lower}, Young's convolution inequality shows that
$$
D_{2s}[\rho*|g|^2] \geq D_{2s}[\rho] - \frac12 \|\rho\|_{1+\frac{2s}d}^2 \left\| |x|^{-2s} - |g|^2 * |x|^{-2s}*|g|^2 \right\|_{\frac{d+2s}{4s}} \,.
$$
To summarize, we have
$$
c^{\rm TF}_{d,s} \int_{\R^d} \rho^{1+\frac{2s}d}\,dx + N \|(-\Delta)^\frac s2 g\|_2^2 
\geq \kappa_N^{(d,s)} \left( D_{2s}[\rho] - \mathcal R \right)
$$
with
$$
\mathcal R := \frac12 \|\rho\|_{1+\frac{2s}d}^2 \left\| |x|^{-2s} - |g|^2 * |x|^{-2s}*|g|^2 \right\|_{\frac{d+2s}{4s}} + \const \int_{\R^d} \rho^{1+\frac{2s}d}\,dx \,.
$$
Similarly as in the proof of the lower bound we now assume that $g(x) = \ell^{-\frac d2} G( \ell^{-1} x)$ for an $L^2$-normalized function $G\in H^s(\R^d)$ and a parameter $\ell>0$ to be chosen. We consider $G$ as fixed and obtain, as before
$$
\mathcal R \lesssim \ell^{\frac{2s(d-2s)}{d+2s}} \|\rho\|_{1+\frac{2s}d}^2 + \|\rho\|_{1+\frac{2s}d}^{1+\frac{2s}d} \,.
$$
Thus,
\begin{equation}
	\label{eq:upperboundproof}
	c^{\rm TF}_{d,s} \int_{\R^d} \rho^{1+\frac{2s}d}\,dx + \const \ell^{-2s} N 
	\geq \kappa_N^{(d,s)} \left( D_{2s}[\rho] - \const \left( \ell^{\frac{2s(d-2s)}{d+2s}} \|\rho\|_{1+\frac{2s}d}^2 + \|\rho\|_{1+\frac{2s}d}^{1+\frac{2s}d} \right) \right).
\end{equation}


\subsection{The semiclassical problem}

The following result states that the variational problem defining $\tau_{d,s}$ has an optimizer. This result is not strictly necessary for our proof of the upper bound in Theorem \ref{main}, but it is readily available and makes the proof more transparent.

\begin{lemma}\label{exopt}
	Let $d\geq 1$ and $0<s<\frac d2$. Then there is a $0\leq\rho_*\in L^{1+\frac{2s}d}\cap L^1(\R^d)$, $\rho_*\neq 0$, such that
	$$
	\frac{\int_{\R^d} \rho_*(x)^{1+\frac{2s}d}\,dx \left( \int_{\R^d} \rho_*(x)\,dx \right)^{1-\frac{2s}d}}{D_{2s}[\rho_*]}
	= \tau_{d,s} \,.
	$$
\end{lemma}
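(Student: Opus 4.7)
The plan is to prove existence by the direct method, using the two-parameter scale invariance of the Rayleigh quotient and Riesz rearrangement to reduce to a minimizing sequence that enjoys a pointwise envelope strong enough for dominated convergence in the natural Hardy--Littlewood--Sobolev space.

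First, I would note that the quotient defining $\tau_{d,s}$ is invariant under the transformations $\rho(x)\mapsto\mu\,\rho(\lambda x)$ for all $\mu,\lambda>0$, and that symmetric decreasing rearrangement preserves the $L^p$-norms while only increasing $D_{2s}[\rho]$ by Riesz's rearrangement inequality \cite{LiLo}. I may therefore work with a minimizing sequence $\rho_n\geq 0$ of symmetric decreasing functions normalized by $\int_{\R^d}\rho_n\,dx=1$ and $\int_{\R^d}\rho_n^{1+\frac{2s}{d}}\,dx=1$, with $D_{2s}[\rho_n]\to\tau_{d,s}^{-1}$. The symmetric decreasing property yields $\rho_n(x)\leq\|\rho_n\|_p\,(\omega_d|x|^d)^{-1/p}$ for every $p\geq 1$, where $\omega_d$ is the volume of the unit ball. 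Applying this with $p=1$ and $p=1+\frac{2s}{d}$ and taking the pointwise minimum produces a single dominating function
$$
h(x):=C\min\bigl(|x|^{-d},\ |x|^{-d^2/(d+2s)}\bigr).
$$
A direct check using $0<s<d/2$ shows $h\in L^{d/(d-s)}(\R^d)$: near the origin, integrability reduces to $d^3/[(d+2s)(d-s)]<d$, equivalent to $s(d-2s)>0$; at infinity it reduces to $d^2/(d-s)>d$.

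Next, Helly's selection theorem applied to the monotone radial profiles produces a subsequence (still indexed by $n$) converging pointwise almost everywhere to a symmetric decreasing limit $\rho_*$. Since $|\rho_n-\rho_*|^{d/(d-s)}\leq(2h)^{d/(d-s)}\in L^1(\R^d)$, dominated convergence upgrades this to $\rho_n\to\rho_*$ strongly in $L^{d/(d-s)}(\R^d)$. The Hardy--Littlewood--Sobolev inequality \cite[Theorem~4.3]{LiLo} makes $D_{2s}$ a continuous bilinear form on $L^{d/(d-s)}\times L^{d/(d-s)}$, and the polarization bound
$$
\bigl|D_{2s}[\rho_n]-D_{2s}[\rho_*]\bigr|\lesssim\|\rho_n-\rho_*\|_{d/(d-s)}\bigl(\|\rho_n\|_{d/(d-s)}+\|\rho_*\|_{d/(d-s)}\bigr)
$$
then tends to zero. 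Hence $D_{2s}[\rho_*]=\tau_{d,s}^{-1}>0$; in particular $\rho_*\not\equiv 0$. Fatou's lemma supplies $\|\rho_*\|_1\leq 1$ and $\|\rho_*\|_{1+2s/d}^{1+2s/d}\leq 1$, so inserting $\rho_*$ into the definition of $\tau_{d,s}$ gives the reverse inequality and forces equality throughout, showing that $\rho_*$ is an optimizer.

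The main obstacle to anticipate is the classical pair of failure modes of scale-invariant variational problems---vanishing and dichotomy of the minimizing sequence. Both are suppressed here because exhausting both scaling freedoms via the two norm normalizations traps the symmetric decreasing profiles inside the single $L^{d/(d-s)}$-envelope $h$, which is precisely the space in which $D_{2s}$ is continuous. This is what allows the argument to proceed via direct pointwise plus dominated convergence rather than the full concentration-compactness machinery.
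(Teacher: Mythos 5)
Your proof is correct, and it is in essence the argument that the paper gestures at: the paper merely cites \cite[Appendix~A]{LiOx} (the Lieb--Oxford paper) for the case $s=\tfrac12$, $d=3$ and asserts that the proof generalizes verbatim; what you have written out is that generalization in full. The structure you follow---two-parameter scaling to normalize $\|\rho\|_1=\|\rho\|_{1+2s/d}^{1+2s/d}=1$, symmetric decreasing rearrangement (which leaves the $L^p$ norms unchanged and increases $D_{2s}$ by Riesz), the pointwise envelope $\rho_n(x)\le\|\rho_n\|_p(\omega_d|x|^d)^{-1/p}$, and passage to the limit in the precise Lebesgue space $L^{d/(d-s)}$ where HLS makes $D_{2s}$ continuous---is exactly the Lieb--Oxford mechanism. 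Your arithmetic also checks out: $1\le d/(d-s)\le 1+2s/d$ since $d^2\le(d+2s)(d-s)=d^2+s(d-2s)$, the exponent inequalities $d^3/((d+2s)(d-s))<d$ and $d^2/(d-s)>d$ reduce respectively to $s(d-2s)>0$ and $s>0$, the scaling determinant $2s\neq 0$ shows both normalizations can be imposed simultaneously, and the polarization estimate
$$
\left|D_{2s}[\rho_n]-D_{2s}[\rho_*]\right|\lesssim\|\rho_n-\rho_*\|_{d/(d-s)}\left(\|\rho_n\|_{d/(d-s)}+\|\rho_*\|_{d/(d-s)}\right)
$$
follows directly from HLS applied to the bilinear form. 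Combined with Fatou in the numerator and the strict positivity $D_{2s}[\rho_*]=\tau_{d,s}^{-1}>0$ guaranteeing $\rho_*\neq 0$, the argument is complete.
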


In the special case $s=\frac12$, $d=3$ this appears in \cite[Appendix A]{LiOx}. The proof in the general case is exactly the same.

For the sake of completeness we mention that the uniqueness (up to translations, dilations and multiplication by a constant) of $\rho_*$ has been studied in \cite{LiYa1}, as well as in the recent papers \cite{CaCaHo1,CaCaHo2}.


\subsection{Proof of the upper bound in Theorem \ref{main}}

Let $\rho_*$ be the optimizer from Lemma \ref{exopt}. After a dilation and a multiplication by a constant we may assume that
$$
\int_{\R^d} \rho_*\,dx = 1 = \int_{\R^d} \rho_*^{1+\frac{2s}d}\,dx \,,
\qquad
D_{2s}[\rho_*] = \tau_{d,s}^{-1} \,.
$$
We then apply the construction outlined in this section with the choice $\rho = N \rho_*$. Inequality \eqref{eq:upperboundproof} turns into
$$
c_{d,s}^{\rm TF} \left( 1 + \const \ell^{-2s} N^{-\frac{2s}d} \right) \geq \kappa_N^{(d,s)} \tau_{d,s}^{-1} N^{1-\frac{2s}d} \left( 1 - \const \left( \ell^{\frac{2s(d-2s)}{d+2s}} + N^{-1+\frac{2s}d} \right) \right).
$$
Choosing
$$
\ell = N^{-\frac{d+2s}{2d^2}}
$$
we obtain, for all sufficiently large $N$, the claimed bound \eqref{eq:upper}.


\appendix

\section{An order of magnitude bound}

Our goal in this appendix is to prove the lower bound
\begin{equation}
	\label{eq:lowerordersharp}
	\inf_{N\geq 2} N^{1-\frac{2s}d} \kappa_N^{(d,s)} > 0
\end{equation}
for $0<s<\frac d2$ with $s\leq 1$. This is weaker than the asymptotics in Theorem \ref{main}, but it does capture the right order of magnitude as $N\to\infty$ and we feel that the argument is robust and may be useful in other contexts as well.

The main step in the proof of \eqref{eq:lowerordersharp} is the following bound, which is similar to the sought-after Hardy inequality, but with an additional positive term on the left side.

\begin{proposition}\label{hardyprop}
	Let $d\geq 1$, $0<s<\frac d2$ and $\tau>0$. Then there is a constant $C(\tau)>0$ such that for any $N\geq 2$ and any antisymmetric function $u\in \dot H^s(\R^{dN})$,
	\begin{equation*}
		\sum_{n=1}^N \int_{\R^{dN}} \left(|(-\Delta_n)^\frac s2 u|^2+\frac{\tau |u|^2}{\delta_n^2}\right) \,dX
		\geq C(\tau) N^{-1+\frac{2s}d} \sum_{1\leq n<m\leq N} \int_{\R^{dN}} \frac{|u|^2}{|X_n-X_m|^{2s}} \,dX \,.
	\end{equation*}
\end{proposition}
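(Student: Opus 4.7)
The plan is to follow the Lévy--Leblond strategy from Section~\ref{sec:lower}, but to replace the sharp coherent-state estimate of Corollary~\ref{summary} with a cruder order-of-magnitude estimate for the Coulomb energy $D_{2s}[\rho_\psi]$. By density and homogeneity one first reduces to smooth, antisymmetric, $L^2$-normalized $u$.

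Apply the Lévy--Leblond identity~\eqref{eq:levyleblond} with partition sizes $M$ and $K=N-M$ and charge parameter $Z=M/K$. For each partition $(\pi_1,\pi_2)$ of $\{1,\ldots,N\}$ with $|\pi_1|=M$, Proposition~\ref{electrostatic} (applied with $\lambda=2s$ and $d\mu=|u|^2\,d\pi_1$, conditionally on the $\pi_2$-coordinates) bounds the electron-nucleus attraction minus nucleus-nucleus repulsion by $D_{2s}[\rho_u^{\pi_1}]$ plus a nearest-neighbor error of order $Z\sum_{m\in\pi_1}\int |u|^2/\delta_m^{2s}\,dX$. To handle $D_{2s}[\rho_u^{\pi_1}]$ without needing the sharp Thomas--Fermi constant, combine the Hardy--Littlewood--Sobolev inequality $D_{2s}[\rho]\lesssim \|\rho\|_{2d/(2d-2s)}^2$ with interpolation between $L^1$ (where the mass is $M$) and $L^{1+2s/d}$, and then apply the Lieb--Thirring inequality (Lemma~\ref{lt}) to bound the $L^{1+2s/d}$ factor by the fractional kinetic energy of the $\pi_1$-particles. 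This yields $D_{2s}[\rho_u^{\pi_1}]\lesssim M^{1-2s/d}\sum_{m\in\pi_1}\|(-\Delta_m)^{s/2}u\|_2^2$.

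Averaging over partitions with the choice $K\sim N^{2s/d}$, one computes that the Lévy--Leblond prefactor is $O(1)$ and the right-hand side of the proposition is dominated by
\[
C_1 N^{1-2s/d}\sum_n\|(-\Delta_n)^{s/2}u\|_2^2 \;+\; C_2 N^{1-2s/d}\sum_n\int\frac{|u|^2}{\delta_n^{2s}}\,dX.
\]
The nearest-neighbor integral is then absorbed in two different ways according to the value of $s$: when $s=1$ one has $\delta_n^{-2s}=\delta_n^{-2}$, so it is directly $\tau^{-1}$ times the extra positive term on the left-hand side of the proposition; when $s<1$ one invokes Proposition~\ref{nearestneighbor}, which bounds it by $c^{-1}\sum_n\|(-\Delta_n)^{s/2}u\|_2^2$, and absorbs it into the kinetic energy. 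In either case the desired bound follows with a constant of the form $C(\tau)\sim 1+\tau^{-1}$.

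The main obstacle is the $s<1$ mismatch between the $\delta_n^{-2s}$ appearing in the electrostatic error and the $\delta_n^{-2}$ available on the left-hand side: a naive pointwise splitting $\delta_n^{-2s}\leq r^{2-2s}\delta_n^{-2}+r^{-2s}$ leaves an $L^2$-type residue that is not proportional to the left-hand side, so one cannot close the argument using only the extra $\tau\delta_n^{-2}$ term. This forces the use of Proposition~\ref{nearestneighbor}, which for $s\leq 1$ supplies the required estimate in terms of the kinetic energy. The extra $\tau\delta_n^{-2}$ term plays no active role in the $s<1$ proof of the proposition itself, but becomes essential afterwards: by applying the proposition to $u(X)\mapsto u(\lambda X)$ and letting $\lambda\to 0$, the $\lambda^{2-2s}$ factor on that term forces it to vanish in the limit, so the purely kinetic Hardy bound~\eqref{eq:lowerordersharp} is recovered.
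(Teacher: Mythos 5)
Your route through Proposition~\ref{electrostatic}, HLS interpolation, and Lemma~\ref{lt} is a genuinely different decomposition, but it has a gap: you invoke Proposition~\ref{nearestneighbor}, which the paper only proves for $s\leq 1$, while Proposition~\ref{hardyprop} is claimed for all $0<s<\frac d2$ (which for $d\geq 3$ includes $s>1$, a case you never address). The paper makes exactly this point in the remark right after the statement of Proposition~\ref{hardyprop}: ``while Proposition~\ref{hardyprop} does not require $s\leq 1$, our proof of \eqref{eq:lowerordersharp} does, since we apply Proposition~\ref{nearestneighbor}.'' The paper's proof keeps Proposition~\ref{hardyprop} free of that restriction by a different mechanism: in the L\'evy--Leblond decomposition the extra $\tau\delta_n^{-2s}$ terms are redistributed (via the constraint $\lambda M+\alpha K=\tau M$) into electron-side contributions $\lambda\delta_m^{-2s}$ and nucleus-side contributions $\alpha\delta_k^{-2s}$; the electron-side terms are bundled into the one-body potential $V_R$ and controlled by the Lieb--Thirring eigenvalue-sum bound $\Tr((-\Delta)^s-\lambda V_R)_-\lesssim \lambda^{1+d/2s}K^{d/2s-1}U_R$ of Lemma~\ref{ltvu}; and nonnegativity $h_\pi\geq 0$ is verified as an operator inequality partition by partition. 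Neither Proposition~\ref{electrostatic}, nor Lemma~\ref{lt}, nor Proposition~\ref{nearestneighbor} appears.

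Two further points. First, the ``mismatch'' you discuss between $\delta_n^{-2s}$ and $\delta_n^{-2}$ is spurious: the $\delta_n^2$ in the displayed statement is a typo for $\delta_n^{2s}$, as is clear both from the paper's proof (where $\lambda\delta_m^{-2s}$ and $\alpha\delta_k^{-2s}$ appear) and from the derivation of \eqref{eq:lowerordersharp}, where Proposition~\ref{nearestneighbor} supplies $\delta_n^{-2s}$. With the intended exponent the error term from Proposition~\ref{electrostatic} matches the helper term for every $s$, so direct absorption (after tuning $K\sim\tau^{-1}N^{2s/d}$ rather than fixing $K\sim N^{2s/d}$ independently of $\tau$) would close your argument for all $0<s<\frac d2$ without ever invoking Proposition~\ref{nearestneighbor}. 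Second, your closing scaling observation fails once the exponent is corrected: all three quantities in the proposition then scale as $\lambda^{2s}$, so the inequality is scale-invariant and the $\tau$-term cannot be scaled away. This is exactly why passing from Proposition~\ref{hardyprop} to \eqref{eq:lowerordersharp} genuinely requires Proposition~\ref{nearestneighbor}, and hence $s\leq 1$. A minor point: $C(\tau)\sim 1+\tau^{-1}$ has the wrong monotonicity, as $C(\tau)$ must degenerate as $\tau\to 0$; it should be of order $\min\{1,\tau\}$.
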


\begin{proof}[Proof of \eqref{eq:lowerordersharp} given Proposition \ref{hardyprop}]
	Denoting by $c$ the implicit constant in Proposition \ref{nearestneighbor}, we infer from that proposition and from Proposition \ref{hardyprop} that for any $0<\theta<1$
	\begin{align*}
		\sum_{n=1}^N (-\Delta_n)^\frac s2 
		& \geq (1-\theta) \sum_{n=1}^N \left( (-\Delta_n)^s + \frac{\theta\, c}{1-\theta} \, \delta_n^{-2s} \right) \\
		& \geq (1-\theta)\, C( \tfrac{\theta\,c}{1-\theta}) \, N^{-1+\frac{2s}d} \sum_{1\leq n<m\leq N} |X_n-X_m|^{-2s}.
	\end{align*}
	Hence $N^{1-\frac{2s}d} \kappa_N \geq \sup_{0<\theta<1} (1-\theta)\, C( \tfrac{\theta\,c}{1-\theta})>0$, as claimed.
\end{proof}

We emphasize that, while Proposition \ref{hardyprop} does not require $s\leq 1$, our proof of \eqref{eq:lowerordersharp} does, since we apply Proposition \ref{nearestneighbor}.

It remains to prove Proposition \ref{hardyprop} and to do so, we proceed again with the help of the L\'evy-Leblond method \cite{LL}. We split the $N$ variables $X=(X_1,\ldots,X_N)$ into a group of `electronic' variables $Y=(Y_1,\ldots,Y_M)$ and a group of `nuclear' variables $R=(R_1,\ldots,R_K)$ with $M+K=N$. For a fixed $R\in\R^{dK}$ with $R_k\not =R_l$ for $k\not =l$ we define the function on $\R^d$,
\begin{equation*}
	V_R(y) := \sum_{k=1}^M \frac1{|y-R_k|^{2s}} - \frac1{\delta_R(y)^{2s}} \,,
\end{equation*} 
and the constant
\begin{equation*}
	U_R := \sum_{1\leq k<l\leq K} \frac1{|R_k-R_l|^{2s}} + \sum_{k=1}^K \frac1{\delta_k(R)^{2s}} \,.
\end{equation*}
Here, as before, $\delta_R(y)=\min\{|y -R_k| :\, 1\leq k\leq K \}$ and $\delta_k(R)=\min\{|R_k - R_l|:\ 1\leq k\leq K\,,\ l\neq k \}$.

We will estimate the sum of the negative eigenvalues of the (one-particle) operator $(-\Delta)^s -\lambda V_R$ in $L^2(\R^d)$ in terms of $U_R$.

\begin{lemma}\label{ltvu}
	Let $d\geq 1$ and $0<s<\frac d2$. Then, for all $K\geq 2$, $R\in\R^{dK}$ and $\lambda>0$,
	\begin{equation}\label{eq:lt}
		\Tr((-\Delta)^s -\lambda V_R)_- \lesssim \lambda^{1+ \frac d{2s}} \, K^{\frac d{2s} - 1} \, U_R
	\end{equation}
	with an implicit constant that only depends on $d$ and $s$.
\end{lemma}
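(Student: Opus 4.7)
The natural approach is to apply the standard one-particle fractional Lieb--Thirring inequality
\[
\Tr\bigl((-\Delta)^s - V\bigr)_- \lesssim \int_{\R^d} V(y)^{1+d/(2s)}\,dy \qquad(d>2s)
\]
to $V=\lambda V_R$. This reduces the lemma to the single integral estimate
\[
\int_{\R^d} V_R(y)^{1+d/(2s)}\,dy \lesssim K^{d/(2s)-1}\,U_R \,.
\]

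To prove the integral estimate, I would decompose $\R^d$ into the Voronoi cells $\Omega_l=\{y:\,|y-R_l|\le|y-R_k|\ \forall k\}$. On $\Omega_l$ one has $\delta_R(y)=|y-R_l|$, and consequently
\[
V_R(y)=\sum_{k\neq l}|y-R_k|^{-2s}.
\]
Two pointwise bounds hold on $\Omega_l$: first, $|y-R_k|\geq|y-R_l|$, whence $V_R(y)\leq(K-1)|y-R_l|^{-2s}$; second, the half-space bound $|y-R_k|\geq|R_l-R_k|/2$, which is the same elementary fact already used in the proof of Proposition \ref{electrostatic}, whence $V_R(y)\leq 2^{2s}S_l$ with $S_l:=\sum_{k\neq l}|R_l-R_k|^{-2s}$. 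I would then split $\Omega_l$ into the inner region $\Omega_l\cap B(R_l,\delta_l(R)/2)$, whose volume is bounded by $c_d\,\delta_l(R)^d$, and its outer complement; the configuration bound handles the inner region directly, while on the outer region one uses the radial bound for large $|y-R_l|$ and the configuration bound for moderate $|y-R_l|$, balancing the two at the radius where they coincide and integrating the resulting tail. After summation over $l$, the sum $\sum_l S_l = 2\sum_{k<l}|R_k-R_l|^{-2s}$ produces the first summand of $U_R$, while the second summand $\sum_k\delta_k(R)^{-2s}$ is available to absorb the boundary contributions coming from the outer regions.

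The hard part will be extracting the sharp exponent $K^{d/(2s)-1}$ rather than the weaker $K^{d/(2s)}$ that a crude Jensen-type expansion $V_R^p\leq(K-1)^{p-1}\sum_{k\neq l}|y-R_k|^{-2sp}$ would produce. The missing factor of $K$ reflects the following geometric fact: when $V_R$ is large at a point $y$, many $R_k$'s must cluster near $y$, and this same clustering is constrained by the mutual distances that also appear in $U_R$. Capturing this gain requires running the inner/outer dichotomy against the two terms of $U_R$ simultaneously rather than estimating each separately.
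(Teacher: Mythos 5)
You start the right way: apply the one-particle Lieb--Thirring inequality to reduce matters to $\int V_R^{1+d/(2s)}\,dy \lesssim K^{d/(2s)-1} U_R$, and the Voronoi decomposition together with the two pointwise bounds $V_R \le (K-1)|y-R_l|^{-2s}$ and $V_R \le 2^{2s}S_l$ on the cell $\Omega_l$ are both correct. But the inner/outer dichotomy you propose cannot reach the sharp exponent: on the inner ball one is forced into $S_l^p\,\delta_l(R)^d$, which upon $S_l \le (K-1)\delta_l(R)^{-2s}$ and $p=1+d/(2s)$ gives $(K-1)^p\,\delta_l(R)^{-2s}$; on the outer region the radial tail gives $(K-1)^p r_0^{-2s}$, and optimizing $r_0$ still leaves $(K-1)^{p-1}S_l = (K-1)^{d/(2s)}S_l$. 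Every branch of your dichotomy produces $K^{d/(2s)}$, which is the exponent you correctly flag as too weak, and the sentence about ``running the dichotomy against the two terms of $U_R$ simultaneously'' describes a wish rather than a mechanism. This is a genuine gap.

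The paper's device for removing the extra factor of $K$ is not a refined region decomposition but an algebraic splitting of the exponent. Writing $V_R = \sum_k \chi_k\,|y-R_k|^{-2s}$ with $\chi_k = 1-\1_{\Gamma_k}$, one applies the power-of-a-sum inequality to the \emph{half} power only,
\[
V_R^{p/2} \le K^{\frac{d-2s}{4s}} \sum_{k} \chi_k\,|y-R_k|^{-\frac{d+2s}{2}},
\]
and then squares. The exponent $\frac{d+2s}{2}$ is chosen precisely so that it lies strictly between $\frac d2$ and $d$: the off-diagonal cross terms $\int |y-R_k|^{-\frac{d+2s}{2}}|y-R_l|^{-\frac{d+2s}{2}}\,dy$ are then absolutely convergent without any cutoff and scale to $\const|R_k-R_l|^{-2s}$, while only the diagonal terms $\int \chi_k\,|y-R_k|^{-d-2s}\,dy$ (which are not locally integrable) need the Voronoi indicator, forcing $|y-R_k|>\delta_k(R)/2$ and yielding $\const\delta_k(R)^{-2s}$. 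The prefactor is $\bigl(K^{\frac{d-2s}{4s}}\bigr)^2 = K^{d/(2s)-1}$, exactly the gain of one power of $K$ over the naive Jensen expansion you mention. Your Voronoi cutoff and half-space bound are the right geometric ingredients, but they must be wired to this ``square the half-power'' step for the sharp constant to emerge.
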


\begin{proof}
	By the Lieb-Thirring inequality (see, e.g., \cite[Theorem 4.60]{FrLaWe}) we have
	\begin{equation*}
		\Tr((-\Delta)^s -\lambda V_R)_- \lesssim
		\lambda^{1 + \frac d{2s}} \int_{\R^d} V_R(y)^{1+\frac d{2s}} \,dy \,.
	\end{equation*}
	To estimate the latter integral we write $V_R(y) = \sum_{k=1}^K \chi_k(y) |y-R_k|^{-2s}$ where $1-\chi_k$ is the characteristic function of the Voronoi cell $\Gamma_k := \{ y:\ |y-R_k|=\min_l |y-R_l| \}$. Note that H\"older's inequality implies
	\begin{equation*}
		V_R(y)^{\frac12 (1+ \frac{d}{2s})} \leq K^{\frac{d-2s}{4s}} \sum_{k=1}^K \chi_k(y) |y-R_k|^{-\frac{d+2s}2} \,.
	\end{equation*}
	Hence
	\begin{align*}
		\int_{\R^d} V_R(y)^{1+\frac d{2s}} \,dy 
		& \leq K^\frac{d-2s}{2s} \sum_{k,l} \int_{\R^d} \chi_k(y) \chi_l(y) |y-R_k|^{-\frac{d+2s}2} |y-R_l|^{-\frac{d+2s}2} \,dy \\
		& \leq K^\frac{d-2s}{2s} \left(2 \sum_{k<l} \int_{\R^d} |y-R_k|^{-\frac{d+2s}2} |y-R_l|^{-\frac{d+2s}2} \,dy \right. \\
		& \qquad\qquad\ \ \left. + \sum_{k} \int_{\R^d} \chi_k(y) |y-R_k|^{-d-2s} \,dy \right).
	\end{align*}
	The first integral is easily found to equal a constant times $|R_k-R_l|^{-2s}$. To estimate the second integral we note that $\{ y:\ |y-R_k|\leq \delta_k(R)/2 \} \subset \Gamma_k$. Extending the domain of integration we find
	\begin{equation*}
		\int_{\R^d} \chi_k(y) |y-R_k|^{-d-2s} \,dy 
		\leq \int_{\{|y-R_k|> \delta_k(R)/2\}} |y-R_k|^{-d-2s} \,dy 
		= \const \delta_k(R)^{-2s}.
	\end{equation*}
	This proves the assertion.
\end{proof}

Now everything is in place for the

\begin{proof}[Proof of Proposition \ref{hardyprop}]
	In view of \eqref{eq:tauspos}, it suffices to prove the bound for sufficiently large $N$. In fact, we will prove the bound for $N\geq N(\tau)$ for some $N(\tau)$ to be determined later.
	
	For given $N\geq 3$, $\tau>0$ and $\kappa>0$ we choose an integer $M\in\{1,\ldots,N-2\}$ and parameters $\lambda,\alpha>0$. Setting $K:=N-M$ we write
	\begin{equation}\label{eq:decomp}
		\sum_{n=1}^N \left((-\Delta_n)^s + \tau\delta_n^{-2s}\right) - \kappa\sum_{1\leq n<m\leq N} |X_n-X_m|^{-2s}
		= \frac NM {N \choose M}^{-1} \sum_\pi h_\pi \,.
	\end{equation}
	Here the sum runs over all partitions $\pi=(\pi_1,\pi_2)$ of $\{1,\ldots,N\}$ into two disjoint sets $\pi_1$, $\pi_2$ of sizes $M$ and $K$, respectively, and for any such partition the operator $h_\pi$ is defined by
	\begin{align*}
		h_\pi & := 
		\sum_{m\in\pi_1}\left((-\Delta_m)^s - \lambda\sum_{k\in\pi_2} |X_m-X_k|^{-2s} + \lambda \delta_m^{-2s} \right) \\
		& \quad + \alpha\sum_{k<l\in\pi_2} |X_k-X_l|^{-2s} 
		+ \alpha\sum_{k\in\pi_2}\delta_k^{-2s}.
	\end{align*}
	In order that \eqref{eq:decomp} be an identity we require
	\begin{equation}\label{eq:req1}
		\lambda M + \alpha K = \tau M
	\end{equation}
	and
	\begin{equation}\label{eq:req2}
		2\lambda MK - \alpha K(K-1) = \kappa M(N-1) \,.
	\end{equation}
	
	It suffices to prove that for $\kappa\leq C(\tau) N^{-1+\frac{2s}d}$ one has $h_\pi\geq 0$ for all partitions $\pi$ as above. We denote the variables in $\pi_1$ by $Y=(Y_1,\ldots,Y_M)$ and those in $\pi_2$ by $R=(R_1,\ldots,R_K)$. Then one has the estimates
	\begin{equation*}
		\delta_m(x) \geq \delta_R(Y_j),
		\qquad m\in\pi_1
	\end{equation*}
	and
	\begin{equation*}
		\delta_k(x) \geq \delta_k(R),
		\qquad k\in\pi_2 \,.
	\end{equation*}
	These two estimates lead to the lower bound
	\begin{equation}\label{eq:fixedr}
		h_\pi \geq
		\sum_{m=1}^M \left( (-\Delta_{Y_m})^s - \lambda V_R(Y_j)\right) 
		+ \alpha U_R \,.
	\end{equation}
	The right side is an operator in $L^2(\R^{dN})$, but there is no kinetic energy associated with the $R$ variables. Hence if we define for fixed $R\in\R^{dK}$ an operator $h^R$ in the antisymmetric subspace of $L^2(\R^{dM})$ by the expression on the right side of \eqref{eq:fixedr}, then one has the estimate
	\begin{equation*}
		h_\pi \geq \inf_{R\in\R^{dK}} \inf\spec h^R.
	\end{equation*}
	Further, since $h^R$ acts on antisymmetric functions one has 
	$$
	\inf\spec h^R \geq-\tr(-\Delta-\lambda V_R)_- + \alpha U_R \,,
	$$
	and hence by Lemma \ref{ltvu}
	\begin{equation*}
		\inf_{R\in\R^{dK}}\inf\spec h^R \geq 0
	\end{equation*}
	provided
	\begin{equation}\label{eq:req3}
		\alpha-C \lambda^{\frac d{2s} + 1} K^{\frac d{2s}-1} \geq 0.
	\end{equation}
	
	It remains to choose the parameters $M,\alpha,\lambda$ such that \eqref{eq:req1}, \eqref{eq:req2} and \eqref{eq:req3} are satisfied. With the choice $\alpha/\lambda=M/K$ equation \eqref{eq:req2} becomes $\lambda = \frac\tau 2$, \eqref{eq:req1} becomes
	\begin{equation}\label{eq:kappa}
		\kappa = \frac{\tau (K+1)}{2(N-1)}
	\end{equation}
	and \eqref{eq:req3} becomes
	\begin{equation}
		\label{eq:req32}
		M \geq C 2^{-\frac d{2s}} \, \tau^{\frac d{2s}} K^{\frac d{2s}} \,.
	\end{equation}
	
	We choose $K= [\epsilon \tau^{-1} N^\frac{2s}d]$, where $\epsilon>0$ will be determined below (depending only on $d$ and $s$). As we mentioned at the beginning of the proof we may assume that $N^{1-\frac{2s}d} \geq 2\epsilon \tau^{-1} =:N(\tau)^{1-\frac{2s}d}$, which guarantees that $K\leq\frac N2$ and consequently $M\geq\frac N2$. This implies that \eqref{eq:req32} is satisfied, provided $\epsilon>0$ is chosen small enough depending on $d$ and $s$. Then $\kappa$ given by \eqref{eq:kappa} is easily seen to satisfy $\kappa\leq C(\tau) N^{-1+\frac{2s}d}$ for all $N\geq N(\tau)$. This completes the proof of Proposition \ref{hardyprop}.
\end{proof}


\section{The borderline case $s=1$, $d=2$}\label{sec:upper2d}

In this appendix, for the sake of definiteness, we focus on the case $s=1$. Our main result assumes $d\geq 3$ and its proof breaks down in several places in dimensions $d=1,2$. Meanwhile, for $d=1$ we know from \cite{HO2LaTi} that $\kappa_N^{(1)}=\frac 12$ for all $N$. In particular, this constant is independent of $N$. In the remaining case $d=2$, we only know that $\kappa_N^{(2)}\geq 4N^{-1}$ for all $N$, but this does probably not capture the correct large $N$-behavior. The following result gives an upper bound

\begin{proposition}
	Let $d=2$ and $s=1$. Then
	\begin{equation}
		\label{eq:upper2d}
		\limsup_{N\to\infty} \ (\ln N)\, \kappa_N^{(2)} \leq 4 \,.
	\end{equation}
\end{proposition}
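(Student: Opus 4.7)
My plan is to prove the upper bound by explicitly constructing a Slater-determinant trial function whose Hardy quotient has the claimed asymptotic value $4/\ln N$. The conceptual picture is that of $N$ noninteracting spinless fermions filling the Fermi sea in a 2D region of fixed area: the semiclassical kinetic energy is $\sim 2\pi N^2/A$ (Weyl law in 2D), while the exchange-corrected Riesz integral with kernel $|x-y|^{-2}$ picks up a factor $\sim\tfrac12\ln N$ from the annulus $1/k_F\lesssim|x-y|\lesssim\sqrt A$; the ratio is $4/\ln N$ regardless of the specific density profile.

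Concretely, fix $L>0$ (the bound is dilation invariant, so $L$ is immaterial) and let $\phi_1,\dots,\phi_N$ be the first $N$ Dirichlet eigenfunctions of $-\Delta$ on $\Omega=(0,L)^2$, extended by zero to $\R^2$; set $u=(N!)^{-1/2}\det(\phi_i(X_j))$. Then $u$ is antisymmetric, $L^2$-normalized, and lies in $\dot H^1(\R^{2N})$. The Slater-determinant identities give
\begin{equation*}
\sum_{n=1}^N\int_{\R^{2N}}|\nabla_n u|^2\,dX=\sum_{j=1}^N\lambda_j\,,\qquad\sum_{n<m}\int_{\R^{2N}}\frac{|u|^2}{|X_n-X_m|^2}\,dX=\tfrac12\iint\frac{\rho(x)\rho(y)-|K(x,y)|^2}{|x-y|^2}\,dx\,dy\,,
\end{equation*}
where $\rho(x)=\sum_j|\phi_j(x)|^2$, $K(x,y)=\sum_j\phi_j(x)\overline{\phi_j(y)}$, and the $\lambda_j$ are the $N$ lowest Dirichlet eigenvalues of $-\Delta_\Omega$. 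Weyl's sharp sum law yields $\sum_{j\le N}\lambda_j=(2\pi N^2/L^2)(1+o(1))$ as $N\to\infty$.

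For the potential integral, I would use the bulk semiclassical asymptotics $K(x,y)\approx\frac{k_F}{2\pi|x-y|}J_1(k_F|x-y|)$ with $k_F=2\sqrt{\pi N}/L$ and $\rho(x)\approx N/L^2$ in the interior of $\Omega$. Since $|K(x,y)|^2\to\rho(x)^2$ as $|x-y|\to 0$ the integrand is integrable near the diagonal, contributing only $O(N^2/L^2)$. Away from the diagonal the exchange kernel decays rapidly, and the explicit asymptotic $\int_0^T t^{-1}(1-(2J_1(t)/t)^2)\,dt=\ln T+O(1)$ with effective cutoff $T\sim k_F L\sim\sqrt{N}$ (supplied by the system size) gives, for the long-range contribution,
\begin{equation*}
\pi\,\|\rho\|_2^2\,\ln(k_FL)\,(1+o(1))=\pi\cdot\tfrac{N^2}{L^2}\cdot\tfrac12\ln N\,(1+o(1))=\tfrac{\pi N^2\ln N}{2L^2}\,(1+o(1)).
\end{equation*}
Dividing kinetic by potential yields $\kappa_N^{(2)}\le(2\pi N^2/L^2)/(\pi N^2\ln N/(2L^2))(1+o(1))=(4/\ln N)(1+o(1))$, which is \eqref{eq:upper2d}.

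The main obstacle will be the bulk semiclassical approximation $K\approx K_{\mathrm{sc}}$ with errors small enough not to spoil the coefficient of $\ln N$: both the boundary layer of $\Omega$ (thickness $\sim 1/k_F$) and the oscillatory tail of $J_1$ must only contribute to the $O(1)$ part of the logarithmic integral. To bypass delicate domain-dependent semiclassics, I would instead carry out the computation on the flat torus $\T_L^2=(\R/L\Z)^2$, where the orbitals are exact plane waves $L^{-1}e^{2\pi i k\cdot x/L}$ with $k\in\Z^2$, the density is exactly $\rho\equiv N/L^2$ by Plancherel, and $K$ is translation invariant and given by an explicit lattice sum amenable to Poisson summation. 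The resulting torus Slater determinant would be transplanted back to $\R^{2N}$ by multiplying by $\prod_n\chi(X_n/L')$ for a smooth compactly supported cutoff $\chi$ with $L'\gg L$; since $\nabla\chi=O(1/L')$ the induced perturbations of kinetic and potential energy are lower order in $(L')^{-2}$ compared to the leading $(\ln N)/L^2$-terms, and the bound follows upon sending $L'\to\infty$ and then $N\to\infty$.
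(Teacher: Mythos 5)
Your overall plan matches the paper's: build a Slater determinant of $N$ plane-wave-like orbitals filling a Fermi disc in a box of size $L$, show the kinetic energy is $\sim 2\pi N^2/L^2$, extract the $\tfrac12\ln N$ from the direct ($\rho\rho$) part of the Riesz energy over length scales between $1/k_F$ and $L$, and argue that the exchange ($|\gamma|^2$) part is $O(1)$ rather than $O(\ln N)$. All the constants you write down agree with the paper's, and the heuristic Bessel-function picture you describe is exactly the one the paper sketches. However, two of the steps you leave as sketches are precisely the places where work is required, and as written your construction has a genuine gap.

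First, the torus-to-$\R^2$ transplantation is not sound as stated. If you periodically extend the $N$ plane waves from $\T_L^2$ to $\R^2$ and multiply by a cutoff $\chi(\cdot/L')$ with $L'\gg L$, the resulting orbitals are no longer orthonormal in $L^2(\R^2)$ (so the determinant is not normalized, and the one-body-density and kinetic-energy identities you invoke fail), and the particle number becomes $\sim N(L'/L)^2$ rather than $N$, so the trial function is in the wrong $N$-sector. Sending $L'\to\infty$ at fixed $N$ makes this worse, not better. The paper resolves exactly this point by using the cutoff from Gontier--Lewin--Nazar: $\phi_p(x)=L^{-1}\sqrt{\1_{Q_L}*\zeta}\,e^{ip\cdot x}$ with $p\in\tfrac{2\pi}{L}\Z^2$ and a mollifier $\zeta$ at scale $\ell\ll L$. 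This very specific cutoff keeps the orbitals \emph{exactly} orthonormal in $L^2(\R^2)$ and keeps the density \emph{exactly} constant on $Q_{L-\ell}$ — you need this precision, because a generic smooth cutoff of the torus plane waves does neither.

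Second, your argument that the exchange term is $O(\mu^2 L^2)$ (rather than $O(\mu^2L^2\ln N)$) relies on the continuum Bessel asymptotic $|\gamma_u(x,x')|\lesssim\mu^{1/4}|x-x'|^{-3/2}$, and you acknowledge you are substituting a Riemann sum by an integral uniformly over $\sqrt\mu\,|x-x'|\ge C$, $|x-x'|\le L$. The paper explicitly notes it \emph{cannot} prove that bound and instead proves the weaker $|\gamma_u(x,x')|\lesssim\mu^{1/2}|x-x'|^{-1}$ on $Q_{L-\ell}\times Q_{L-\ell}$, via a deterministic estimate $\bigl|\sum_{\tau\in I\cap\frac{2\pi}{L}\Z}e^{i\tau t}\bigr|\lesssim L/\dist(t,L\Z)$ (a one-dimensional geometric-series bound, applied to each coordinate), which combined with the trivial $|\gamma_u|^2\le\rho_u(x)\rho_u(x')$ still makes the exchange integral $O(\mu^2L^2)$. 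Your Poisson-summation remark points in a workable direction, but you do not carry it out, and the Bessel heuristic by itself does not close the gap. So the missing ingredients relative to a complete proof are: (i) an orthonormality-preserving cutoff (the Gontier--Lewin--Nazar smoothing), and (ii) a rigorous, uniform bound on the exchange kernel replacing the Bessel asymptotic.
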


It is a tantalizing question whether the right side of \eqref{eq:upper2d} is, in fact, the limit of $(\ln N)\, \kappa_N^{(2)}$. We would like to express our gratitude to Robert Seiringer for first suggesting \eqref{eq:upper2d} and for several discussions related to it.

\begin{proof}
Our construction depends on two main parameters, $L$ and $\mu$. Given a sequence of $N$'s tending to infinity, these parameters will be chosen such that $N=\#\mathcal N$ for a certain set $\mathcal N$ satisfying
$$
\left\{ p\in \frac{2\pi}{L}\,\Z^2 :\ |p|^2 < \mu \right\} \subset \mathcal N \subset \left\{ p\in \frac{2\pi}{L}\,\Z^2 :\ |p|^2 \leq \mu \right\}.
$$
This implies that
$$
\frac1{4\pi}\, \mu L^2 \sim N \to \infty \,.
$$
The antisymmetric function $u$ on $\R^{2N}$ that we will use as a trial function to bound $\kappa_N^{(2)}$ from above will be a Slater determinant of functions that are essentially plane wave restricted to $Q_L:=(-L/2,L/2)^2$ with momenta in $\mathcal N$. There are several ways to construct such functions and here we use a method that we learned from \cite{GoLeNa}.

Let $0\leq\zeta\in C^1_c(Q_\ell)$ with $\int_{Q_\ell} \zeta\,dx = 1$, where $\ell>0$ is a parameter satisfying $\ell\ll L$. (We keep track of it only for dimensional consistency.) For $p\in \frac{2\pi}{L}\,\Z^2$ let
$$
\phi_p(x) := L^{-1} \, \sqrt{ \1_{Q_L} * \zeta }\, e^{i p\cdot x}
\qquad\text{for all}\ x\in\R^2 \,.
$$
A computation \cite{GoLeNa} (see also \cite[Lemma 7.21]{FrLaWe}), based on the Fourier transform of the characteristic function of an interval, shows that the $\phi_p$ are orthonormal in $L^2(\R^2)$. We define $u$ as their Slater determinant,
$$
u(X) = (N!)^{-\frac12}\, \det \left( \phi_{p_n}(X_{n'}) \right)_{p_n,p_{n'}\in\mathcal N} \,,
$$
where $p_1,\ldots, p_N$ is an enumeration of $\mathcal N$. We have  \cite{GoLeNa} (see also \cite[Lemma 7.21]{FrLaWe})
\begin{equation}
	\label{eq:upper2dkinetic}
	\sum_{n=1}^N \int_{\R^{2N}} |\nabla_n u|^2\,dX = \sum_{p\in\mathcal N} \int_{\R^2} |\nabla\phi_p|^2\,dx = \frac1{8\pi} \, \mu^2 L^2 \left( 1+ o(1) \right)
\end{equation}
in the asymptotic regime that we are considering.

Our task is to bound from below
\begin{equation}
	\label{eq:upper2dinter}
	\sum_{1\leq n<m\leq N} \int_{\R^{2N}} \frac{|u(X)|^2}{|X_n-X_m|^2}\,dX = \frac12 \iint_{\R^2\times\R^2} \frac{\rho_u(x)\,\rho_u(x') - |\gamma_u(x,x')|^2}{|x-x'|^2}\,dx\,dx \,,
\end{equation}
where
\begin{align*}
	\rho_u(x) & = \sum_{p\in\mathcal N} |\phi_p(x)|^2 = L^{-2} \, N \, \1_{Q_L} * \zeta \,,\\
	\gamma_u(x,x') & = \sum_{p\in\mathcal N} \phi_p(x) \overline{\phi_p(x')} = L^{-2} \sqrt{1_{Q_L}*\zeta(x)} \, \sqrt{1_{Q_L}*\zeta(x')} \sum_{p\in\mathcal N} e^{ip\cdot(x-x')} \,.
\end{align*}
Note that the integrand on the right side of \eqref{eq:upper2dinter} is nonnegative. Consequently, we obtain a lower bound by restricting it to
$$
\Omega := \{ (x,x') \in Q_{L-\ell} \times Q_{L-\ell} :\ \sqrt\mu\, |x-x'| > C \}
$$
for a certain constant $C$, independent of $L$ and $\mu$, and to be chosen below. Note that for $x\in Q_{L-\ell}$ we have $1_{Q_L}*\zeta(x) = 1$. Therefore the $\rho$-part of the integral on the right side of \eqref{eq:upper2dinter}, restricted to $\Omega$, is bounded from below by
$$
L^{-4} N^2 \iint_{Q_{L-\ell} \times Q_{L-\ell}} \frac{\1(\sqrt\mu\, |x-x'| > C)}{|x-x'|^2}\,dx\,dx' = L^{-4} N^2 (L-\ell)^2 \, \mathcal I(C\mu^{-\frac12}(L-\ell)^{-1})
$$
where
$$
\mathcal I(\epsilon) := \iint_{Q_1\times Q_1} \frac{\1(|y-y'|\geq \epsilon)}{|y-y'|^2}\,dy\,dy' \,.
$$
An elementary computation shows that $\mathcal I(\epsilon) = 2\pi (\ln\frac1\epsilon) (1+o(1))$. Using $\ell\ll L$ and $N\sim (4\pi)^{-1} \mu L^2$, we deduce that
$$
\iint_\Omega \frac{\rho_u(x)\,\rho_u(x')}{|x-x'|^2}\,dx\,dx' = 2\pi \, L^{-2} N^2 (\ln(\sqrt\mu L)) \, (1+o(1)) = \frac1{16\pi} \, \mu^2 L^2 (\ln N) \, (1+o(1)).
$$
Comparing this with \eqref{eq:upper2dkinetic} we arrive at the constant $4$ on the right side of \eqref{eq:upper2d}.

Thus it remains to prove that the $\gamma$-part of the right side of \eqref{eq:upper2dinter}, restricted to $\Omega$, is negligible for some (and, in fact, any) choice of $C$. Before giving a complete proof, let us explain the heuristics. The nonrigorous step is that we approximate
$$
L^{-2} \sum_{p\in\mathcal N} e^{ip\cdot(x-x')} \approx \int_{|p|^2<\mu} e^{ip\cdot(x-x')}\,\frac{dp}{(2\pi)^2} \,.
$$
(Since $\mu L^2\gg 1$, this is justified for fixed $x-x'$, but we shall use it uniformly for $\sqrt\mu |x-x'|\geq C>0$ with $|x - x'|\leq L$.) It is known that
$$
\int_{|p|^2<\mu} e^{ip\cdot(x-x')}\,\frac{dp}{(2\pi)^2} = (2\pi)^{-1} \, \sqrt\mu \, |x-x'|^{-1} \, J_1(\sqrt\mu |x-x'|) \,,
$$
where $J_1$ is a Bessel function \cite[Chapter 9]{AbSt}. Using the decay bound on Bessel functions, $|J_1(t)|\lesssim t^{-1/2}$, \cite[(9.2.1)]{AbSt} we obtain
$$
\left| \int_{|p|^2<\mu} e^{ip\cdot(x-x')}\,\frac{dp}{(2\pi)^2} \right| \lesssim \mu^\frac14\, |x-x'|^{-\frac 32} \,.
$$
From this we arrive at the expectation that for $x,x'\in Q_{L-\ell}$, at least on average, one has
\begin{equation}
	\label{eq:upper2dexchangebound}
	|\gamma_u(x,x')|  \lesssim \mu^\frac14\, |x-x'|^{-\frac 32} \,.
\end{equation}
Accepting this bound, we obtain by straightforward estimates
$$
\iint_\Omega \frac{|\gamma_u(x,x')|^2}{|x-x'|^2}\,dx\,dx' \lesssim \sqrt\mu \iint_\Omega \frac{dx\,dx'}{|x-x'|^5} \lesssim \mu^2 L^2 \,.
$$
Recalling that our lower bound on the $\rho$-term in \eqref{eq:upper2dinter} is of size $\mu^2 L^2 \ln (\mu L^2)$, we see that the $\gamma$-term is indeed negligible. 

We now present a rigorous proof that the $\gamma$-term is neglibigle. We will not be able to prove the bound in \eqref{eq:upper2dexchangebound}, but we will be able to prove that
\begin{equation}
	\label{eq:upper2dexchangebound2}
	|\gamma_u(x,x')|  \lesssim \mu^\frac12\, |x-x'|^{-1} 
	\qquad\text{if}\ (x,x')\in Q_{L-\ell} \times Q_{L-\ell} \,.
\end{equation}
Accepting this bound and combining it with the trivial bound $|\gamma_u(x,x')|^2\leq \rho_u(x)\rho_u(x')$, we obtain
$$
\iint_\Omega \frac{|\gamma_u(x,x')|^2}{|x-x'|^2}\,dx\,dx' \lesssim 
L^2 \left( \mu \int_{C\mu^{-\frac12}< |r|\leq \frac12 L} \frac{dr}{|r|^4} + \mu^2 \int_{\frac12 L< |r|< L} \frac{dr}{|r|^2} \right) \lesssim \mu^2 L^2 \,,
$$
which is the same as if the heuristic bound \eqref{eq:upper2dexchangebound} was true.

It remains to prove \eqref{eq:upper2dexchangebound2}. We bound
$$
\left| \sum_{p\in\mathcal N} e^{ip\cdot r} \right| \leq \min\left\{ \sum_{p_1^2\leq\mu} \left| \sum_{p_2:\, (p_1,p_2)\in\mathcal N} e^{ip_2r_2} \right|, \sum_{p_2^2\leq\mu} \left| \sum_{p_1:\, (p_1,p_2)\in\mathcal N} e^{ip_1r_1} \right| \right\}
$$
and use the elementary inequality, valid for any interval $I\subset\R$ and $t\in\R\setminus L \Z$,
$$
\left| \sum_{\tau\in I\cap \frac{2\pi}{L}\Z} e^{i\tau t} \right| \lesssim \frac L{\dist(t,L\Z)} \,.
$$
The latter follows by summing a trigonometric series. Since the sum over $p_j^2\leq\mu$ contains $\lesssim \mu^\frac12 L$ elements, we deduce that
$$
\left| \sum_{p\in\mathcal N} e^{ip\cdot r} \right| \lesssim \mu^\frac12 L \min\left\{ \frac L{\dist(r_2,L\Z)}, \frac L{\dist(r_1,L\Z)} \right\}.
$$
If $|r|\leq \frac L2$, then $\dist(r_j,L\Z)=|r_j|$ for $j=1,2$. Moreover, $\max_j r_j^2 \geq \frac12 |r|^2$. Therefore the right side is $\lesssim \mu^\frac12 L^2 |r|^{-1}$. This, applied to $r=x-x'$, yields \eqref{eq:upper2dexchangebound2} and concludes the proof of the proposition.
\end{proof}

\begin{remarks}
	(a) In physics, the vanishing of $\rho_u(x)\,\rho_u(x') - |\gamma_u(x,x')|^2$ near $x=x'$ is called the \emph{exchange hole}. The intuition is that it is this exchange hole that leads to the Hardy inequality for (spin-polarized) fermions in two dimensions. This hole, which is of size $\mu^{-\frac12}$ in the above example, mitigates the logarithmic divergence of the integral $|x-x'|^{-2}$ and leads to the logarithmic behavior of the constant $\kappa_N^{(2)}$.\\
	(b) It is essential for the validity of $\kappa_N^{(2)}>0$ that the fermionic particles are spinless (or spin-polarized). If $u$ has two or more spin states, there is no reason that $\sum_{n<m} \sum_\sigma \iint_{\R^{2N}} |X_n-X_m|^{-2} |u(X,\sigma)|^2\,dX$ is finite. (Here $\sum_\sigma$ denotes the sum over spin-states.)  
\end{remarks}


\bibliographystyle{amsalpha}

\end{document}